\documentclass[reqno]{amsart}
\usepackage{hyperref} 
\UseRawInputEncoding
\usepackage{color}
\usepackage{latexsym}
\usepackage{graphicx}
\usepackage{mathrsfs}
\usepackage{amssymb}
\usepackage{amsfonts}
\usepackage{amssymb,amsmath,amsthm,tikz}



\newcommand{\phip}{\phi_{P^o} }

\hypersetup{colorlinks = true, urlcolor = black}
\headheight=6.15pt 
\textheight=8.75in 
\textwidth=6.5in
\oddsidemargin=0in 
\evensidemargin=0in 
\topmargin=0in


\newcommand{\red}{\color{red}}

\newcommand{\edit}[1]{ {\red \cs #1 \cs}}
\newcommand{\cs}{$\clubsuit$}


\DeclareMathOperator{\Vol}{Vol}
\DeclareMathOperator{\Ric}{Ric}
\DeclareMathOperator{\Hilb}{Hilb}

\newcommand{\Szego}{Szeg\"o }



\newcommand{\C}{\mathbb{C}}

\newcommand{\CP}{\mathbb{CP}}

\newcommand{\R}{\mathbb{R}}

\newcommand{\Z}{\mathbb{Z}}

\newcommand{\N}{\mathbb{N}}


 


\newcommand{\dcal}{\mathcal{D}}

\newcommand{\fcal}{\mathcal{F}}

\newcommand{\kcal}{\mathcal{K}}
\newcommand{\lcal}{\mathcal{L}}
\newcommand{\mcal}{\mathcal{M}}
\newcommand{\ncal}{\mathcal{N}}
\newcommand{\ocal}{\mathcal{O}}
\newcommand{\pcal}{\mathcal{P}}
\newcommand{\qcal}{\mathcal{Q}}


\newcommand{\wt}{\widetilde}

\newcommand{\bma}{\begin{bmatrix}}
\newcommand{\ema}{\end{bmatrix}}
\newcommand{\baa}{\begin{align*}}
\newcommand{\eaa}{\end{align*}}
\newcommand{\bea}{\begin{eqnarray*} }
\newcommand{\eea}{\end{eqnarray*} }
\newcommand{\bee}{\begin{eqnarray} }
\newcommand{\eee}{\end{eqnarray} }
\newcommand{\be}{\begin{equation} }
\newcommand{\ee}{\end{equation} }
\newcommand{\bt}{\begin{theorem}}
\newcommand{\et}{\end{theorem}}
\newcommand{\bpf}{\begin{proof}}
\newcommand{\epf}{\end{proof}}
\newcommand{\bl}{\begin{lem}}
\newcommand{\el}{\end{lem}}
\newcommand{\bc}{\begin{cor}}
\newcommand{\ec}{\end{cor}}
\newcommand{\bd}{\begin{defn}}
\newcommand{\ed}{\end{defn}}
\newcommand{\bcs}{\begin{cases}}
\newcommand{\ecs}{\end{cases}}
\newcommand{\bex}{\begin{example}}
\newcommand{\eex}{\end{example}}
\newcommand{\brem}{\begin{rem}}
\newcommand{\erem}{\end{rem}}



\newcommand{\ot}{\otimes}
\newcommand{\half}{\frac{1}{2}}
\renewcommand{\d}{\partial}
\newcommand{\dbar}{\bar\partial}
\newcommand{\ddbar}{\partial\dbar}






\def\XXint#1#2#3{{\setbox0=\hbox{$#1{#2#3}{\int}$ }
\vcenter{\hbox{$#2#3$ }}\kern-.6\wd0}}


\newtheorem{theo}{{\sc Theorem}}[section]
\newtheorem{cor}[theo]{{\sc Corollary}}

\newtheorem{lem}[theo]{{\sc Lemma}}
\newtheorem{prop}[theo]{{\sc Proposition}}
\newtheorem{defn}[theo]{{\sc Definition}}
\newtheorem{rem}{{\sc Remark}}
\newenvironment{example}{\medskip\noindent{\it Example:\/} }{\medskip}

\newcommand{\QQ}{\mathcal{Q}}

\newcommand{\T}{{\mathbf T}^m}

\newcommand{\Hess}{{\operatorname {Hess}}}

\newcommand{\szego}{Szeg\"o }

\newcommand{\kahler}{K\"ahler }

\title{Entropy of Bergman measures of a toric Kaehler manifold}

\author{Pierre Flurin and Steve Zelditch }
\address{Department of Mathematics, Northwestern  University, Evanston, IL 60208, USA}

\email{zelditch@math.northwestern.edu}

\thanks{Research partially supported by NSF grant  DMS-1810747.
and by the Stefan Bergman trust  }

\date{\today}

\begin{document}

\begin{abstract} Associated to the Bergman kernels of a polarized toric
\kahler manifold $(M, \omega, L, h)$  are  sequences of measures $\{\mu_k^z\}_{k=1}^{\infty}$
parametrized by the points $z \in M$. We determine the asymptotics of the entropies $H(\mu_k^z)$
of these measures. 
The sequence $\mu_k^z$ in some ways resembles a sequence of
convolution powers; we determine precisely when it actually is such a sequence. When $(M, \omega)$ is a Fano toric manifold with positive
Ricci curvature, we show that there  exists a unique point $z_0$ (up to the real torus action) for which $\mu_k^z$ has asymptotically
maximal entropy. If the \kahler metric is \kahler-Einstein, we show that the image of $z_0$ under the moment map is the center of mass
of the polytope. 
We also show that the Gaussian measure on the space $H^0(M, L^k)$ induced by the \kahler metric has
maximal entropy at the balanced metric. 

\end{abstract}

\maketitle

In \cite{Z09}, the second author introduced a sequence $\{\mu_k^z\}_{k=1}^{\infty} $ of  probability 
measures on the convex lattice polytope $P \subset \R^m$ associated
to a toric \kahler manifold $(M, \omega)$. The measures $\mu_k^z$ are supported on the dilated lattice points $P \cap \frac{1}{k}  \Z^m$, and depend on a choice
of Hermitian metric  $h = e^{-\varphi} $ on the toric line bundle  $L \to M$ with $\omega_{\varphi}: = i \ddbar \varphi = \omega$. They  also depend on 
a point $z \in M$, or more precisely on its image $x = \mu(z)$ under the
moment map \begin{equation} \label{MM} \mu_h:= \mu:  M \to P \subset \R^m, \end{equation}  associated to $h$. In the special
case where $M = \CP^1$ and $\omega = \omega_{FS}$ is the Fubini-Study metric, the measures $\mu_k^z$  are the standard binomial measures indexed by $x \in [0,1]$ and coincide with the $k$th convolution power $\mu_x^{* k}$  of
  the Bernoulli measure $\mu_x = x \delta_1 + (1-x) \delta_0$ on $[0,1]$.
 More generally, for the Fubini-Study metric $h_{FS}$ on the kth power of the standard line bundle  $\ocal(k) \to \CP^m$ in any dimension, the measures $\mu_k^z$ are the standard multi-nomial distributions, which are also a sequence of convolution powers. 
    For general toric \kahler manifolds, 
  the  sequences $\{\mu_k^z\}_{k=1}^{\infty}$ is certainly {\it not} a sequence
  of convolution powers.     Yet, many of the classical results on convolution powers are also valid for the sequence $\{\mu_k^z\}_{k=1}^{\infty} $: In \cite{SoZ12} they
  are shown to  satisfy  a law of large numbers and a
large deviations principle; more recently, they were proved to satisfy  a central limit theorem \cite{ZZ18}. The purpose of this
note is to given an asymptotic formula for the entropies of $\{d\mu_k^z\}_{k=1}^{\infty} $, extending the family of probabilistic results one step further. 
We further investigate the points $z$ and metrics $h$ for
which the sequences have asymptotically maximal entropy.  The proofs are non-probabilistic and   are based on Bergman kernel asymptotics, and especially on the local CLT results in \cite{ZZ18} and on the LDP in \cite{SoZ12}.

To state the result, we introduce some notation, referring to Section \ref{BACKGROUND} and to  \cite{Z09, SoZ12,ZZ18} for much of the background.
The moment map \eqref{MM}
   associated to this data defines a torus bundle of the open orbit of $(\C^*)^m$   over the interior of the  convex  lattice polytope  $P$. As reviewed in Section \ref{MONSECT}, there is a natural basis $\{s_{\alpha}\}_{\alpha \in k P}$  of the space $H^0(M, L^k)$ of holomorphic sections of the $k$-th power of $L$ by 
eigensections $s_{\alpha}$ of the $\T$ action. In a standard frame $e_L$ of $L$ over $M^o$, they correspond to monomials $z^{\alpha}$ on $(\C^*)^m$. The pointwise
norms of $z^{\alpha}$ in the open orbit are given by $|z^{\alpha}|^2 e^{- k \varphi(z)}$ where $h = e^{-\phi}$ in a standard frame. 
The toric \kahler potential  $\phi$ on the open orbit is $\T$-invariant
  and may be viewed as a convex function on $\R^m$. Its Legendre transform $u$
  is a convex function on $P$ known as the symplectic potential. For instance, the symplectic potential of the Fubini-Study 
metric is $u_{FS}(x) = x \log x + (1 -x) \log (1- x)$ (see Section \ref{BACKGROUNDOO}).

For $\alpha \in k P \cap \Z^m$, we define \begin{equation} \label{PHK} \pcal_{h^k}(\alpha, z): =
\frac{|z^{\alpha}|^2 e^{- k \varphi(z)}}{Q_{h^k}(\alpha)},
\end{equation}
where $Q_{h^k}(\alpha)$ is defined in \eqref{QFORM}.
 Further, we denote by $\Pi_{h^k}: L^2(M, L^k) \to H^0(M, L^k)$ the \szego projector and by $\Pi_{h^k}(z)$ the associated density
of states, i.e. the metric contraction of the diagonal of the kernel of $\Pi_{h^k}$; see Section \ref{MONSECT}.  We now come to the main definition:
\begin{defn}
For any $z \in M^o$ and $k \in \N$, we define the probability measure on  $P \subset \R^m$ by,
\begin{equation} \label{MUKZDEF} \mu_k^z := \frac{1}{\Pi_{h^{k }}(z,z)}\;\; \sum_{\alpha \in kP \cap \Z^m}
\frac{|s_{\alpha}(z)|_{h^{k }}^2}{\|s_{\alpha}\|_{h^{k }}^2}   \;
\delta_{\frac{\alpha}{k }} =   \frac{1}{\Pi_{h^{k }}(z,z)}\;\; \sum_{\alpha \in kP \cap \Z^m}  \pcal_{h^k}(\alpha, z) \delta_{\frac{\alpha}{k }}
\end{equation} 
 \end{defn}  Note that 
$\frac{1}{\Pi_{h^{k }}(z,z)} \sum_{\alpha \in k P \cap \Z^m} 
 \pcal_{h^k}(\alpha, z)  = 1. $
The measures are $\T$-invariant in $z$, and therefore define a family   discrete  measures  on
  $P \cap \frac{1}{k} \Z^m$ parametrized by points $\mu_h(z) \in P$. Although it is not explicit in the notation, $\mu_k^z$ depends on 
  the choice of Hermitian metric $h$ on $L$. For background on `lattice probability measures' we refer to \cite{GK}.
  
  \begin{rem} All of the techniques and result of this article can be extended to the case where $\mu(z) \in \partial P$, i.e. $z$ lies on the divisor
  at infinity of $M$. Indeed, the formulae derive from the large deviations principle of \cite{SoZ12} and the convergence theorem for
  geodesics \cite{SoZ07, SoZ10} and these results were proved for all $z$, including $z$ on the divisor at infinity. But since it is lengthier and more technical to work at the boundary, for the sake of brevity we assume $z \in M^o$ in this article. \end{rem}

  \subsection{Asymptotics of entropy  of $\mu_k^z$}
  
    The (Shannon) entropy of a discrete probability measure with masses   $\{p_{\alpha}\}$ is defined by (cf. \cite{KS})  $$H  = - \sum_{\alpha} p_{\alpha} \ln p_{\alpha}. $$
  Thus, the entropy of $\mu_k^z$ is
  \begin{equation} \label{ENTFORM} H(\mu_k^z) =- \sum_{\alpha \in k P 
  \cap \Z^m} \frac{\pcal_{h^k}(\alpha, z)}{\Pi_{h^k}(z)}  \ln \frac{\pcal_{h^k}(\alpha, z)}{\Pi_{h^k}(z)} . \end{equation}
  The asymptotic entropy  result is:

  \begin{theo} \label{H}  Let $h = e^{-\phi}$ be a toric Hermitian metric on $L \to M$ and let $\omega_{\varphi} = i \ddbar \phi$ be the corresponding \kahler metric.
   Then, as $k \to \infty$, 
  \[ H(\mu_k^z) = \frac{1}{2}\log(\det \left((2 \pi e k)  (i \ddbar \varphi \rvert_z) \right) + o(1) \]  \end{theo}

Note that  the entropy depends only on the image $\mu_h(z) = x_0$ of $z$ under the moment map \eqref{MM}. Also, $\det (i \ddbar \varphi)$ is the density of
the volume form $\omega_{\varphi}^m$ relative to Lebesgue measure on the open orbit.    
 As in \cite{Ab98}
it is convenient to rewrite $\log \det i \ddbar \varphi$  in terms of the symplectic potential and its Hessian in  action-angle variables,  with action variables $x \in P$ and angle variables $\theta$ on $\mu_h^{-1}(x)$. We recall that  the  symplectic potential $u$ is the Legendre transform of the open orbit \kahler potential; we refer Section \ref{BACKGROUNDOO}  and to \cite{Ab98, Ab03} for background. 
Then set, 
 \begin{equation} \label{LDEF} L(x) = \half \log \det \nabla^2 u(x)  = - \half \log \det i \ddbar \varphi, \end{equation}
and Theorem \ref{H} may be reformulated as follows. 
  \begin{theo} \label{H2TH}  Let $h = e^{-\phi}$ be a toric Hermitian metric on $L \to M$ and let $u$ be the open orbit symplectic potential. 
   Then, as $k \to \infty$, 
  \[ H(\mu_k^z) = \frac{1}{2}\log(\det \frac{(2 \pi e k)}{ \nabla^2 u |_{\mu_h(z)} })+ o(1) = \frac{m}{2} \log (2 \pi e k)   - L(x) + o(1). \]  \end{theo}
  
  Note that 
the entropy of uniform measure $\mu_{k P \cap \Z^m} $  on a set of $r$ element is $\log r$. The number $  \# (kP \cap \Z^m)$ of such lattice points is $\simeq k^m \#(P \cap \Z^m)$, so that uniform measure on these lattice points has entropy
   $m \log k + \log \#(P \cap \Z^m) $.  $\mu_k^z$ is not uniform, but rather    is approximately a  discretized Gaussian distribution centered at $\mu(z)$ and of width $k^{-\half}$  (see Lemma \ref{pcalLem} and  Lemma \ref{COMPAREPIT2}  for the precise statements).  A discretized 
   Gaussian of width $k^{-\frac{1}{2}}$ and of height $k^m$  is concentrated in the ball $B(z, k^{-\half})$ and is similar to uniform measure on that ball of the same
   height. This approximation accurately predicts  the leading order term  $\log k^{m/2}$.

\begin{rem} One may expect analogous results for non-compact infinite volume toric \kahler manifolds, such as $\C^m$ with the Bargmann-Fock space
of analytic functions. The techniques of \cite{F12} apply in that setting. However, the large deviations results have not been established in such cases,
and we confine the article to compact \kahler manifolds. \end{rem}

  Theorem \ref{H2TH}  specializes to  known asymptotics of entropies  of multinomial distributions when $(M, \omega)$ is complex projective space with Fubini-Study metric.
    In dimension $m =1$, the binomial distributions are convolution powers $\mu_k^p =  (\mu_p)^{*k}$ of the Bernoulli measure $\mu_p$ defined by $\mu_p(\{1\}) = p, 
  \mu_p(\{0\}) = 1-p$.   In this case, the entropy asymptotics can be obtained from local central limit theorems
  and Stirling's formula, and according to  \cite[Theorem 2]{JSz99} and to \cite{K98}), $H(\mu_k^z)$ has a complete asymptotic expansion in powers of $k^{-1}$ whose coefficients
involve the Bernoulli numbers.   The entropy of $\mu_p$  is $p \log p + (1 - p) \log (1 - p) = u_{FS}(p)$, the Fubini-Study symplectic potential (see Section 
\ref{BACKGROUNDOO} and \cite{Ab98} for background). Thus, $p (1-p) = (u_F''(p))^{-1}$. The parameter
  $p \in [0,1]$ 
  is the image of the parameter $z \in \CP^1$ under the Fubini-Study moment map. 
The $k$th convolution power $\mu_k^{p }$ is the binomial measure, for which
$p_{k, \ell} = {k\choose  \ell} p^\ell (1 -p)^{k - \ell}$. Its Shannon entropy
has the asymptotics (see \cite[Corollary 1]{JSz99}),
$$H(\mu_k^p)  = \half \log k + \half(1 + \log(2 \pi p(1 - p)) + O(k^{-\half} + \epsilon). $$  To compare with Theorem \ref{H2TH}, we note that in the Fubini-Study case, 
$u_{FS}'(x) = \log \frac{x}{1-x}$,
$u_{FS}''(x) = \frac{1 }{x (1 - x)}$,  $\log (u_{FS}''(x))^{-1} =  \log x (1-x)$.

Now consider multinomial distributions, which correspond to the toric \kahler manifold $M = \CP^{m}$
with the Fubini-Study metric $h_{FS}$ on $L = \ocal(1)$. The parameters
  $\vec p$ corresponds to a point  in $ \Delta_{m} = \{\vec p \in \R_+^{m+1} : \sum_{j=1}^{m+1} p_j =1\}$, which 
  is polytope associated to  $\CP^{m}$.  Given $k \in \Z_{\geq 1}$, let $\vec \alpha \in \Z_+^{m+1}$ and let $k = |\vec \alpha|$. 
  If $\vec x \in \R^{m+1}$ let $\vec x^{\vec \alpha} = \prod_{j=1}^{m+1} x_1^{\alpha_1} \cdots x_{m+1}^{\alpha_{m+1}}$. 
 A random vector $\vec X = (X_1, \dots, X_{m+1})$ has the multinomial distributions with parameters $k$ and $\vec p$
 if $\rm{Prob} [\vec X = \vec \alpha] = {k \choose \vec \alpha} \vec p^{\vec \alpha}$ where ${k \choose \vec \alpha} = \frac{k!}{\vec \alpha!}$.

It is proved in \cite[Theorem 1]{CG12} and \cite{Mat78} that the multinomial distributions with parameters $k$ and $\vec p= (p_1, \dots, p_{m+1})$ 
has the asymptotic form,
$$H(\mu_p^{*k}) = \half \log ((2 \pi k e)^{m } p_1 \cdots p_{m+1}) + \frac{1}{12 k}\left(3(m+1) - 2 -\sum_{j=1}^{m+1} \frac{1}{p_j} \right) + O(\frac{1}{k^2}). $$

\begin{rem} Since $\sum_{j=1}^{m+1} p_j =1$, there are only $m$ independent $p_j$. In the formula of Theorem \ref{H},  the
parameter $m + 1$ in the   multinomial case corresponds to $\CP^{m}$, so the coefficients of $\log k$ agree.\end{rem}

Aside from asymptotic entropies of multinomial distributions, there  exist  few general results on asymptotic entropies of convolution
powers $\mu^{*k}$.
  Asymptotics of entropies to several orders for  certain classes of discrete distributions  as $k \to \infty$ were obtained in
  \cite{K98, JSz99}.
In the case of sums of i.i.d. real-valued random variables, i.e.  convolution powers of probability measures on $\R$, Dyachkov proved
in \cite[Theorem 2]{D96} that
$$H(\mu^{*k})
\simeq \half( \log k ) + \half \log (2 \pi e \sigma^2) + o(1).$$

In view of the resemblence of the entropy asymptotics of the toric \kahler probability measures $\mu_k^z$ to convolution powers, it is natural
to characteristic the toric Hermitian line bundles $(L, h) \to (M, \omega)$ for which $\mu_k^z$ is a sequence of convolution powers. 

\begin{theo} \label{INVCONV} The sequence $\{\mu_k^z\}_{k=1}^{\infty} $ is a sequence of convolution powers for all $z$ if and only if
$\rm{Hilb}_k(h)$ is {\it balanced} for all $k$, i.e. the density of states  $\Pi_{h^k}(z) = D_k$ is constant for all $k$. Hence, $\omega$ is a \kahler
metric of constant scalar curvature; \bigskip


\end{theo}
To prove Theorem \ref{INVCONV} we first prove a result about balanced metrics on any \kahler manifold which seems of
independent interest.

     \begin{prop}\label{EQUIV}  For any \kahler manifold $(M, \omega, J)$, the following are equivalent: \begin{enumerate}
\item $\rm{Hilb}_k(h)$ is {\it balanced} for all $k$, i.e. the density of states  $\Pi_{h^k}(z) = C_k$ is constant for all $k$.  \bigskip

\item $\Pi_{h^k}(z,w) = A_k  [\Pi_{h^1}(z,w)]^k$, where 
$$A_k = \frac{\dim H^0(M, L^k)}{(2\pi)^m Vol(P)}    \left( \frac{(2 \pi)^m  Vol(P)} { \dim H^0(M, L)}\right)^k.$$\bigskip
\end{enumerate}

\end{prop}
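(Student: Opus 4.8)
The plan is short: the implication $(1)\Rightarrow(2)$ will come from the classical \emph{polarization} (sesqui-analyticity) property of Bergman kernels, and $(2)\Rightarrow(1)$ from a convexity inequality. Recall the polarization principle: a kernel $K(z,w)$ on $M\times M$ that is holomorphic in $z$ and anti-holomorphic in $w$ is completely determined by its restriction $K(z,z)$ to the diagonal. Indeed, in local holomorphic coordinates and a holomorphic frame of $L$ such a $K$ is represented by a function holomorphic in $(z,\bar w)$; if it vanishes on $\{w=z\}$ then, substituting $\bar w=\bar z$ and using the linear independence of the monomials $a^{p}\bar a^{q}$, all its Taylor coefficients at a diagonal point vanish, so $K$ vanishes near the diagonal, and since $w\mapsto K(z,w)$ is anti-holomorphic on the connected manifold $M$ it vanishes identically. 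I will also use the trace identity $\int_M\Pi_{h^k}(z,z)\,dV=N_k$, where $N_k=\dim H^0(M,L^k)$ and $V$ denotes the total volume of $M$ in the normalization defining the $L^2$ inner products (so $V=(2\pi)^m\Vol(P)$ in the toric case). It shows that whenever $\Pi_{h^k}(z,z)$ is constant that constant must equal $C_k:=N_k/V$, and a one-line computation then identifies $A_k=C_k/C_1^k$ with the constant in the statement.

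For $(2)\Rightarrow(1)$: restricting $\Pi_{h^k}(z,w)=A_k\,\Pi_{h^1}(z,w)^k$ to the diagonal and integrating gives $N_k=A_k\int_M\Pi_{h^1}(z,z)^k\,dV$. By Jensen's inequality for the convex function $t\mapsto t^k$, $\frac1V\int_M\Pi_{h^1}(z,z)^k\,dV\ge\left(\frac1V\int_M\Pi_{h^1}(z,z)\,dV\right)^k=(N_1/V)^k$, with equality precisely when $\Pi_{h^1}(z,z)$ is constant. Since $A_k\cdot V\cdot(N_1/V)^k=N_k$, equality must hold, so $\Pi_{h^1}(z,z)$ is constant, and therefore $\Pi_{h^k}(z,z)=A_k\Pi_{h^1}(z,z)^k$ is constant for every $k$; this is $(1)$. (Already the level $k=2$, via Cauchy--Schwarz, suffices.)

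For $(1)\Rightarrow(2)$: assume $\Pi_{h^k}(z,z)\equiv C_k=N_k/V$ for all $k$, so $A_k=C_k/C_1^k$ is the constant of the statement. Form the kernel
\[ R_k(z,w):=A_k\,\Pi_{h^1}(z,w)^k-\Pi_{h^k}(z,w). \]
Here $\Pi_{h^1}(z,w)^k$ is the fiberwise $k$-th power, a kernel for $L^k$; like $\Pi_{h^k}(z,w)$ it is holomorphic in $z$ and anti-holomorphic in $w$, so $R_k$ is of the same type. On the diagonal its metric contraction equals $A_kC_1^k-C_k=0$, and since the fiber $(L^k)_z\otimes\overline{(L^k)_z}$ is one-dimensional and $h^k$ is non-degenerate this forces $R_k(z,z)=0$ for every $z$. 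By the polarization principle above, $R_k\equiv 0$, i.e.\ $\Pi_{h^k}(z,w)=A_k\,\Pi_{h^1}(z,w)^k$; this is $(2)$.

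The one mildly delicate point is the arithmetic pinning the multiplicative constant down to exactly $A_k=C_k/C_1^k$, so that $R_k$ genuinely vanishes on the diagonal; everything else is the polarization lemma and convexity. As a byproduct, specializing $w$ in $(2)$ shows each $\Pi_{h^k}(\cdot,w)=A_k\,\Pi_{h^1}(\cdot,w)^k$ lies in the image of the multiplication map $\mathrm{Sym}^k H^0(M,L)\to H^0(M,L^k)$, and these kernels span $H^0(M,L^k)$ as $w$ varies; so either condition forces that map to be surjective, consistent with the strong rigidity of metrics that are balanced at every level.
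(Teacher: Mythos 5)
Your proof is correct, and half of it diverges from the paper's in an interesting way. The direction $(1)\Rightarrow(2)$ is essentially the argument the authors give: pin down $C_k=N_k/V$ from the trace identity, observe the desired relation on the anti-diagonal, and propagate it to all of $M\times\bar M$ by sesqui-analyticity; your version, phrased via the kernel $R_k=A_k\Pi_{h^1}^k-\Pi_{h^k}$ and the Taylor-coefficient form of the polarization lemma, is just a more explicit write-up of the same idea. The direction $(2)\Rightarrow(1)$ is where you genuinely depart. The paper argues by contradiction: if $\Pi_{h^1}(z,z)$ is non-constant it is $<\alpha C_1$ and $>\beta C_1$ on non-empty open sets for some $\alpha<1<\beta$, so $(2)$ forces $\Pi_{h^k}(z,z)$ to be exponentially smaller, resp.\ larger, than $C_k$ there, contradicting the Tian--Yau--Zelditch expansion $\Pi_{h^k}(z,z)=\frac{N_k}{V}+o(N_k)$. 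You instead integrate $(2)$ on the diagonal and invoke the equality case of Jensen's inequality for $t\mapsto t^k$: since $A_k\,V\,(N_1/V)^k=N_k$ exactly, the inequality $\frac1V\int\Pi_{h^1}^k\,dV\ge(N_1/V)^k$ is saturated, forcing $\Pi_{h^1}(z,z)$ constant. Your route is more elementary and more economical --- it needs only the trace identity and a single level $k\ge2$ (Cauchy--Schwarz at $k=2$ already suffices), whereas the paper's argument leans on the full asymptotic expansion of the density of states and on $(2)$ holding for arbitrarily large $k$. The one point to keep explicit is that the equality case of Jensen requires $k\ge2$ (strict convexity) and that continuity of $\Pi_{h^1}(\cdot,\cdot)$ upgrades ``constant a.e.''\ to ``constant''; you have both. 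The concluding remark about surjectivity of $\mathrm{Sym}^kH^0(M,L)\to H^0(M,L^k)$ is a correct but inessential aside.
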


In the case of a toric \kahler manifold, $$A_k = \left(\frac{\#\{\alpha \in k \overline{P} \cap \Z^m\}}{(2\pi)^m Vol(P)} \right)    \left( \frac{(2 \pi)^m  Vol(P)} { \# \{\alpha \in \overline{P} \cap \Z^m\}}\right)^k.$$

We refer to \cite{D02} for background and results on balanced and constant scalar curvature metrics on toric \kahler manifolds.


\subsection{Ricci curvature and measures of maximal entropy}

     The entropy $H(\mu)$ of a discrete probability 
  measure $\mu$ is a measure of the degree to which $\mu$ is uniform. The larger the entropy, the more uniform the measure, so that 
the measure of maximal entropy in a given family of probability measures is the most uniform measure. This measure of maximal entropy is
often considered the most important. Hence it is natural to ask for which $z$ does $\mu_k^z$ have maximal entropy in the family $\mu_k^z$, at
least asymptotically as $k \to \infty$. For instance, in the case of binomial measures $\mu_p^{*k}$, $p = \half$.

Locating the point $\mu(z) = x$ where $\mu_k^z$ has asymptotically maximal entropy is related to the Ricci curvature  of $(M,\omega)$.
We recall that the Ricci curvature
of the \kahler metric $\omega_{\varphi}$ is given by $\rm{Ric}(\omega) = - i \ddbar \log \det (g_{i \bar{j}})$, i.e. $\rm{Ric}_{k \ell}= - \frac{\partial^2}{\partial z_k \partial \bar{z}_{\ell}} (\log \det g_{i \bar{j}})$ where
$\omega = \frac{i}{2} g_{i \bar{j}} dz^i \wedge d\bar{z}^j$.   In \cite{Ab98} it is shown that in the toric case,
\begin{equation} \label{RICCI} \rm{Ric} = - \half dd^c \log \det H  = - \half \sum_{i, j, k}^m H_{ij, jk} dx_k \wedge d \theta_i, \end{equation}
Thus, the  Ricci potential is the function $- L(x)$ \eqref{LDEF}.

Due to the inverse relation of $  i \ddbar \phi$ and $\nabla^2 u$,  points where the Ricci potential is maximal are points where \eqref{LDEF} is minimal.
In the simplest case of the Fubini-Study symplectic potential on $\CP^1$, in a standard gauge the symplectic potential satisfies,
 $\log u_{FS}''(x) = - \log x (1-x)$,  and $\frac{d^2}{dx^2} \log u_{FS}''(x) = x^{-2} + (1 - x)^{-2}$. The unique minimum point of $\log u''_{FS}$ occurs at $x= \half$.
 In the case of multinomial distributions and Fubini-Study potentials in higher dimensions, the maximum occurs at the center of mass of the simplex. These are model cases of toric Fano \kahler-Einstein manifolds.
 It turns out that related statements are true for  compact toric   \kahler manifolds with positive Ricci curvature.  
 We recall that  $\rm{Ric} (\omega)$ represents the first Chern class $c_1(M)$ and $\rm{Ric}  > 0$ implies that $(M, \omega)$ is a toric Fano manifold.
That is,  if $\rm{Ric}(\omega) > 0$, then $\omega$ is a positively curved metric on the anti-canonical bundle $-K_X$, hence $-K_X$ is ample. A toric Fano 
manifold has a distinguished center, namely the center of mass of polytope.  We refer to \cite{D08} for background and results on toric
 Fano \kahler manifolds and their preferred centers.

\begin{theo}\label{MAX}  For fixed $(L, h, M, \omega)$, the points $x = \mu(z)$ for which  the measures $\mu_k^z$ have asymptotically maximal
entropy as $k \to \infty$   occur at the minimum points of $L(x)$ \eqref{LDEF}. If $(M, \omega)$ is Fano and $\rm{Ric}(\omega)$ is positive, then there is a unique minimum.
In the  \kahler-Einstein Fano case, where $\Ric(\omega) = a \omega$, the point of maximal entropy is the center of mass of $P$ (which equals
$0$ if $P$ is put in  the  form of  \cite{M87}.)
 \end{theo}

For instance, in the case of Fubini-Study metrics on $\CP^m$, the open orbit \kahler potential is $\log (1 + |w|^2), w \in \C^m$, and 
$-\log \det \nabla^2 \varphi (\rho)  = (m+1) \varphi(\rho) - \rho. $ The unique point of maximal entropy is given by $ \frac{e^{\rho}}{1 + e^{\rho}} = \frac{1}{m} (1, \dots, 1)$.
In the gauge  of Mabuchi \cite{M87}, where the polytope is translated by  $-\frac{1}{m} (1, \dots, 1)$, the unique point is $0$ (see Section \ref{GAUGESECT} for
background on gauges).


\begin{rem} The Mabuchi functional $\mcal(\omega)$  on \kahler metrics involves the relative entropy of $\omega_{\phi}^m$ and of a background volume form. 
As shown in \cite[Proposition 3.2.8]{D02}, it  is given on a toric \kahler manifold  by $\mcal(\omega) = (2 \pi)^n \fcal_a(u)$ where,
$$\fcal_a(u) = \int_P L(x) dx + \int_{\partial P} u d \sigma - a \int_P u dx, $$
where $a = \frac{\rm{Vol}(\partial P, d \sigma)}{\rm{Vol}(P)}$ where $d\sigma$ is Euclidean surface measure.
\end{rem}

\subsection{Differential entropy of the Gaussian measure $\gamma_{h^k}$}

There is a second (and much simpler) problem regarding entropies of probability measures on a toric \kahler manifold, or indeed on any
 polarized \kahler manifold. 
    Associated to any Hermitian metric $h$ on $L$ is a sequence  $\{\rm{Hilb}_k(h)\}_{k=1}^{\infty}  $ of Hermitian   inner products on $H^0(M, L^k)$.     In turn the inner product induces a Gaussian measure $\gamma_{h_k}$ on $H^0(M, L^k)$.  If we fix a background metric
    $h_0$, or corresponding inner product $G_0$, then the inner product $\rm{Hilb}_k$ is represented by a positive Hermitian matrix $P$ and the Gaussian measure $\gamma_k^h$
    is represented by $\det P e^{- \langle P^{-1} X, X \rangle}$ on $\C^{N_k}$ where $N_k = \dim_{\C} H^0(M, L^k)$,

When a probability measure  $\mu$ on $\R^n$ has a  density $f$ relative to Lebesgue measure $dx$, its {\it differential entropy} is
defined by 
$$H(f dx) = - \int_{\R^n} f(x) \log f(x) dx. $$
It is well-known that if $f(x) = N(\mu, \sigma) = \frac{1}{\sqrt{2 \pi }\sigma} \exp \left(- \frac{(x - \mu)^2}{2 \sigma^2} \right)$ is a Gaussian, then,
$$h(f dx) = \ln (\sigma \sqrt{2 \pi e}). $$

We now calculate the differential entropy of the Gaussian measures $\gamma_k^h$. 

\begin{prop} \label{HILBH} Let $(L, h, M, \omega)$ be any polarized \kahler manifold, and let $\gamma_k^h$ be the associated
Gaussian measure on $H^0(M, L^k)$. Then $H(\gamma_k^h) = - \log \det \rm{Hilb}_k(h) $. The Hermitian metric $h$ for which $H(\gamma_k^h)$ has maximal entropy is the balanced metric. \end{prop}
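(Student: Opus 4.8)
The plan is to compute $H(\gamma_k^h)$ directly from the Gaussian density formula and then optimize over $h$. First I would fix a background inner product $G_0$ on $H^0(M,L^k)$, so that $\mathrm{Hilb}_k(h)$ is represented by a positive Hermitian matrix $P = P(h)$ and $\gamma_k^h$ has density $f(X) = \det P \cdot e^{-\langle P^{-1}X, X\rangle}$ with respect to Lebesgue measure on $\C^{N_k}$, $N_k = \dim_\C H^0(M,L^k)$. Substituting into $H(f\,dx) = -\int f \log f$, one gets $H(\gamma_k^h) = -\log\det P - \int f(X)\,(-\langle P^{-1}X,X\rangle)\,dx = -\log\det P + \E_{\gamma_k^h}\langle P^{-1}X, X\rangle$. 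The expectation is $\mathrm{tr}(P^{-1}\cdot P) = N_k$ (the covariance of the complex Gaussian with density $\det P\, e^{-\langle P^{-1}X,X\rangle}$ is $P$), so up to the dimensional constant $N_k$, we obtain $H(\gamma_k^h) = -\log\det \mathrm{Hilb}_k(h)$, which is the first claim. (The paper's statement suppresses the additive constant $N_k$; I would either absorb it or state it explicitly.)

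For the optimization claim, the key point is that the map $h \mapsto \mathrm{Hilb}_k(h)$ does not range over all positive Hermitian matrices freely: rescaling $h$ changes the normalization, so one must optimize within a natural constraint. The relevant constraint is that the total volume or the $L^2$-pairing is normalized, equivalently that $\int_M \Pi_{h^k}(z,z)\,\frac{\omega^m}{m!}$ is fixed (it equals $N_k$ always). I would then recall the variational characterization of the balanced metric: $h$ is balanced at level $k$ iff the Bergman density $\Pi_{h^k}(z,z)$ is constant, and this is precisely the critical point of the functional $h \mapsto \log\det \mathrm{Hilb}_k(h)$ (equivalently $-\log\det\mathrm{Hilb}_k$, the entropy) subject to the volume normalization. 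Concretely, if $\{s_\alpha\}$ is an orthonormal basis for $\mathrm{Hilb}_k(h_0)$, the first variation of $\log\det\mathrm{Hilb}_k(h)$ under $h \to h e^{-t\psi}$ is $-k\int_M \psi \sum_\alpha |s_\alpha(z)|^2_{h_0^k}\,\frac{\omega^m}{m!}$ up to constants, i.e. it is stationary against all $\psi$ with $\int_M \psi\,\omega^m = 0$ exactly when $\sum_\alpha |s_\alpha|^2_{h^k} = \Pi_{h^k}(z,z)$ is constant — the balanced condition. The second variation is negative definite (the functional is essentially concave along such paths, being built from $-\log\det$ of a positive matrix depending monotonically on the metric), so the balanced metric is the maximizer.

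The main obstacle I anticipate is making the optimization statement precise: "the Hermitian metric $h$ for which $H(\gamma_k^h)$ is maximal" only makes sense once the class of competing metrics and the normalization are pinned down, since without a constraint $\log\det\mathrm{Hilb}_k(h)$ is unbounded (scale $h$ by $e^{-c}$). I would resolve this by working within metrics inducing a fixed \Kahler class and fixed total volume, invoking Donaldson's theory (as referenced in the paper via \cite{D02}) that the balanced embedding is the unique critical point of this determinant functional and is a maximum. The computation of $H(\gamma_k^h) = -\log\det\mathrm{Hilb}_k(h)$ itself is routine Gaussian integration; the only care needed is the bookkeeping of real versus complex dimensions in the constant $N_k$ and the covariance identity for complex Gaussians.
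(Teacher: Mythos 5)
Your proposal is correct and follows essentially the same route as the paper: compute the Gaussian entropy as $-\log\det \Hilb_k(h)$ (up to an additive dimensional constant), and then identify this functional with Donaldson's $\mathcal{L}$ from \cite{D05}, whose critical points under normalized variations of the potential are exactly the metrics with constant density of states $\Pi_{h^k}(z)$, i.e.\ the balanced metrics. The only differences are cosmetic: the paper computes the entropy relative to a background Gaussian (which suppresses the constant $N_k$ you correctly flag) and, in the toric case, writes $\det \Hilb_k(h)=\prod_{\alpha} Q_{h^k}(\alpha)$, while you spell out the first-variation computation that the paper delegates to \cite[Lemma 2]{D05}.
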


\subsection{Further problems  on the sequence of toric measures}  Although entropy has a natural interpretation for a single probability measure (its degree
of uniformity), it plays a more essential role in the dynamics of Markov chains (the Shannon-Breiman-MacMillan theorem; see \cite{KS}).

A well-known Markov chain is the so-called Wright-Fisher Markov chain: Let $X_n$ $(n \geq 0)$ be the Markov chain with state space $\{0, \dots, N\}$ and with
transition probabilities,
$$p_{ij} : = P(X_{n+1} ) = j | X_n = i) =  {N \choose j} (\frac{i}{N})^j (1 - \frac{i}{N})^{N-j},\;\;(i,j = 0, \dots, N). $$
A  more general Wright-Fisher Markov chain is to define
$$p_{ij} = {N \choose j} \psi_i^j (1 - \psi_i)^{N -j}, $$
where $\psi_i$ are other weights of the lattice points. There is a straightforward generalization to toric \kahler manifolds by defining the right stochastic matrix 
$$P^{(N)}_{\alpha \beta} = \frac{|s_{\alpha}(\beta)|^2_{h^N}}{Q_{h^N}(\alpha) \Pi_{h^N}(\beta, \beta)}. $$
In fact, we could take any orthonormal basis $\{s_{N, j}\}$ and any points $\{z_{N, k}\}_{k=1}^{d_N}$ and form the 
Markov chain with 
$$P^{(N)}_{j, k} : = \frac{|s_{N, j}(z_{N, k})|^2_{h^N}}{\Pi_{h^N}(z_{N, k}, z_{N, k})}. $$
It might be of interest  to determine  the asymptotic entropy of this Markov chain, which is closely related to the measures $\mu_k^z$.



\begin{rem}
The article \cite{DK}  also considers entropy in the context of Bergman kernels, but does not seem to overlap this article. It is devoted to the simpler question of when the density of states $\Pi_{h^k}(z)$ has maximal entropy (it is 
evidently the balanced metric) and its applications to black hole physics. \end{rem}

\subsection{Acknowledgements} Thanks to Peng Zhou for many helpful conversations, particularly about  gauge freedom for toric \kahler manifolds. 
Thanks also to the referee for pointing out some errors and omissions in an earlier version, which led the authors to  add Theorem \ref{MAX}.

\section{\label{BACKGROUND} Background on toric varieties}

We employ the same notation and terminology as in
\cite{SoZ12,ZZ18}. We recall that a toric \kahler manifold is a \kahler manifold
$(M, J, \omega)$ on which the complex torus $(\C^*)^m$ acts
holomorphically with an open orbit $M^o$.  We
 choose a basepoint $z_0$ on the orbit open and
identify  $M^o \equiv (\C^{*})^{m} \{z_0\}$. The underlying real torus is
denoted $\T$ so that $(\C^*)^m = \T \times \R_+^m$, which we write
in coordinates as $z = e^{\rho/2 + i \theta}$ in a multi-index
notation. Thus, $|z|^2 = e^{\rho}$. We often express the \kahler potential in  $\rho$ coordinates.

We assume that $M$ is a smooth projective toric \kahler manifold, hence that  $P$ is a Delzant
polytope, i.e. that  $P$ is
 defined by a set of linear inequalities
\begin{equation} \label{lrdef} \ell_r(x): =\langle x, v_r\rangle-\alpha_r \geq 0, ~~~r=1, ..., d,  \end{equation}
where $v_r$ is a primitive element of the lattice and
inward-pointing normal to the $r$-th $(n-1)$-dimensional face of
$P$. We denote by $P^o$ the interior of $P$ and by $\partial P$
its boundary; $P = P^o \cup
\partial P$. 

\subsection{Toric line bundles and their powers}
We consider powers $L^k \to M$ of an ample toric line bundle $L \to M$ with $k \in \Z$. A model case is that of powers $\ocal(k) \to \CP^m$ of the
dual line bundle $\ocal(1) \to \CP^m$ of the hyperplane line bundle $\ocal(-1) \to \CP^m$.  Positive powers $\ocal(k)$ are ample line bundles and
the holomorphic sections correspond to monomials $z^{\alpha}$ on $\C^{m+1}$ with $|\alpha| = k$.
Negative powers have no holomorphic sections.  

The canonical line bundle $\kcal$ is the top exterior power of the holomorphic cotangent bundle; its sections are smooth $(m,0)$ volume forms. 
Theorem \ref{MAX} concerns Fano toric \kahler manifolds, namely manifolds with ample anti-canonical line bundle (hence, negative canonical line bundle). 
A model example is $\CP^m$, for which $\kcal = \ocal(-(m+1))$.

We need to linearize  (or quantize) the torus action so that it acts
 on $H^0(M, L^k)$. It is sufficent to lift the action to $L^*$.  The lifting procedure is described in \cite[Lemma 1.1]{ZZ19} for single Hamiltonians, and
 essentially the same procedure works to define lifts of the commuting Hamiltonians of a torus action.  We equip $L$ with a toric Hermitian metric $h$ whose
curvature $(1,1)$-form  $\omega$. The  Hermitian metric $h$ on $L$ induces a Chern connection on the $S^1$ bundle $X_h = \partial D_h^* \to M$ where
$D_h^* \subset L^*$ is the unit co-disk bundle with respect to $h$. We then lift the Hamilton vector fields $\xi_{H_j}$ generating the torus action on $M$
to contact vector fields $\hat{\xi}_{H_j} = \xi_{H_j}^h - 2 \pi H_j R$ on $X_h$ where $\hat{\xi}_H^h$ denotes the horizontal lift of $\xi_H$ and where $R$ is
the Reeb vector field generating rotations in the fibers of $X_h \to M$. The vertical and horizontal parts commute, and if $\xi_{H_j}$ commute then
their horizontal lifts commute. We may choose generators so each $\xi_{H_j}$ generates a $2 \pi $-periodic flow (such Hamiltonians are known
as action variables). It is verified in \cite[Lemma 2.6]{ZZ19} that
$\hat{\xi}_{H_j}$ also generates a $2 \pi $-periodic flow. Together with $R$, one has a ${\mathbb T}^{m+1}$ action on $X_h$. The Hamiltonians $H_j$ are
not uniquely defined because one may add a constant $c_j$ to each without changing $\xi_{H_j}$. However, in order that the lifts $\hat{\xi}_{H_j}$
generate periodic flows, it is only possible to add a lattice point $\vec k \in \Z^m$ to the vector $(H_1, \dots, H_m)$ of Hamiltonians. Thus, the possible lifts
form a $\Z^m$-family.

For each choice of lift and each power $L^k$ of the ample toric line bundle, there exists a unique (up to scalars) torus-invariant section, whose restriction to the open orbit
we denote by  $e_{L^k} $.  See \cite{Fu} or \cite[Section 5]{GS82} (which
treats general compact Lie groups). In the case of $\ocal(k) \to \CP^m$ it corresponds to the lattice point $\alpha = 0$.
 For $k = m+1$, the invariant section may be viewed as the multivector $(z_1 \frac{\partial}{\partial z_1}) \wedge \cdots \wedge (z_m \frac{\partial}{\partial z_m})$
dual to the meromorphic invariant volume form $\frac{dz_1}{z_1} \wedge  \cdots  \wedge \frac{dz_m}{z_m},$
which has an order 1 pole at each boundary divisor. 

 A natural basis of the
space of holomorphic sections $H^0(M, L^k)$ associated to the
$k$th power of an ample toric holomorphic line bundle $L \to M$ is the basis of equivariant sections, i.e. holomorphic eigensections of the linearized torus action.
They  are defined in the open orbit  by the  monomials $z^{\alpha}$
where $\alpha$ is a lattice point in the $k$th dilate of the
polytope, $\alpha \in k P \cap \Z^m.$ To be more precise, on the open orbit  $s_{\alpha}(z) =
z^{\alpha} e_{L^k}$. 
Let $I_k \subset \Z^m$ be the subset consisting of the weights in $H^0(M, L^k)$ under the action of $(\C^*)^m$, and let $P_k$ be the convex hull of $I_k$. Then  $P_k = k P'$ for a fixed convex polytope $P'$. We denote the dimension of $H^0(M, L^k)$ by $N_k$.
For background, see \cite{Fu}.

\subsection{\label{MONSECT}Inner products and norms of monomials}

We equip $L$ with a toric Hermitian metric $h$ whose
curvature $(1,1)$-form may be expressed in terms of a local holomorphic frame $e_L$ by  $\omega = i \ddbar \log \|e_L\|_{h}^2$.  
Any  hermitian metric $h$ on $L$ induces inner products
$\Hilb_k(h)$ on $H^0(M, L^k)$, defined by
\begin{equation} \label{HILB} \langle s_1, s_2 \rangle_{\text{Hilb}_k(h)} =
\int_M (s_1(z), s_2(z))_{h^k} \frac{\omega_h^m}{m!}.
\end{equation} The equivariant sections (monomials) are orthogonal with respect to any
such toric inner product. We often express the norm in terms of a local
\kahler potential, $\|e_L\|_{h}^2 = e^{- \varphi}$, so that
$|s_{\alpha}(z)|_{h^k}^2 = |z^{\alpha}|^2 e^{- k \varphi (z)}$ for
$s_{\alpha} \in H^0(M, L^k)$.
The $L^2$ norm-square of
 $s_{\alpha}$ with respect to the natural inner product $\text{Hilb}_k(h)$  induced by the Hermitian metric on $H^0(M, L^k)$ is given by,
\begin{equation} \label{QFORM} Q_{h^k}(\alpha) =  \|s_{\alpha}\|_{h^k }^2 =  \int_{\C^m} |z^{\alpha}|^2 e^{-
k \varphi(z)} dV_{\varphi}(z), \end{equation}
  Here,  $dV_{\varphi} = (i
\ddbar \varphi)^m/ m!$.


\subsection{\label{BACKGROUNDOO} \kahler potential, moment map  and sympletic potential} 
An open-orbit  \kahler potential is a real-valued  torus invariant function $\varphi$ such that $i \ddbar \varphi = \omega$ on the open orbit. 
This potential is only defined up to an additive pluri-harmonic torus invariant function (i.e. an affine function).  Recall that the log coordinates $(\rho, \theta)$ on $M^o \cong (\C^*)^m$ are defined by setting $z_i = e^{\rho_i/2 + \sqrt{-1} \theta_i}$. 
Since the Kahler potential $\varphi$ is $\T$-invariant, $\varphi(z)$ only depends on the $\rho$ variables, hence we may write it as $\varphi(\rho)$ ($e^{\rho} = |z|^2$).
The associated moment map $\mu_h$ is defined as the gradient of the \kahler potential,
\[\mu_h: \R^m_\rho \to P \subset  \R^m_x, \quad \rho \mapsto  \nabla \phi(\rho) . \]
The polytope $P$ is the image of the moment map.
The  moment map $\mu_h: M \to \R^m$ is only well-defined up to an additive constant vector; hence $P$ is only defined up to translation without
further normalization. As this indicates, there are a number of implicit choices in the definition of the \kahler potential and moment map.

Given the norming constants \eqref{QFORM},  a standard definition of   the open-orbit  \kahler potential is,
\begin{equation}\label{OOKP} \varphi(\rho) := \log \left( \sum_{\alpha \in P \cap \Z^m} \frac{e^{\langle \alpha, \rho \rangle}}{Q_{h}(\alpha) } \right). \end{equation}
The sum is essentially the density of states (the value of the Bergman kernel on the diagonal) (see Section \ref{SZEGOSECT}).  More precisely, it is the modulus
square of the coefficient of the Bergman kernel relative to the invariant frame, i.e. the 
ratio of the density of states and the norm square $|e_L|^2$ of the invariant section. These definitions depend on the choice of linearization of the torus action. 
If the invariant section corresponds to the lattice point $\alpha_0$, then the exponent in $e^{\langle \alpha, \rho \rangle}$ in \eqref{OOKP}  is translated to
$e^{\langle \alpha - \alpha_0, \rho \rangle}$.

We now consider the {\it symplectic potential} $u_0$ associated to
$\phip$, defined as the  Legendre transform  of $\varphi$ on $\R^m$:
\begin{equation} \label{SYMPOTDEF} u_{\varphi}(x) = \varphi^*(x) =  \lcal \varphi(x): = \sup_{\rho \in \R^m}
(\langle x, \rho \rangle - \varphi(e^{\rho/2 + i \theta})).
\end{equation}  It is a function on $P$, or in  invariant terms it is a function on   $
Lie(\T)^* \simeq Lie(\R_+^m)^*$. In general, the Legendre
transform of a function on a vector space $V$ is a function on the
dual space $V^*$.  

Thus, 
\begin{equation} \label{SYMPOTDEF2a} u_{\varphi}(x) = \langle x, \rho_x \rangle -
\varphi(\rho_x), \;\;\; e^{\rho_x/2 } = \mu_{\varphi}^{-1}(x) \iff
\rho_x = 2 \log   \mu_{\varphi}^{-1}(x)
\end{equation}  on $P$.  The
gradient $\nabla_x u_{\varphi}$ is an inverse to
$\mu_{\omega_{\varphi}}$ on $M_{\R}$ on the open orbit, or
equivalently on  $P$, in the sense that $\nabla u_{\varphi}
(\mu_{\omega_{\varphi}}(z)) = z$ as long as $\mu_{\omega_{\varphi}}(z)
\notin \partial P$.

The symplectic potential has canonical
logarithmic singularities on $\partial P$. According to \cite{Gu94} and to
 \cite{D02} ( Proposition 3.1.7),
 \begin{equation} \label{CANSYMPOT}
u_0(x) = \sum_k \ell_k(x) \log \ell_k(x) + f_0 \end{equation}
where $f_{0} \in C^{\infty}(\bar{P})$. The Guillemin canonical metric is the special case where $f_0 =0$.

\subsection{\label{GAUGESECT} Gauges and examples}

As mentioned above, \kahler potentials and the corresponding symplectic potentials are not unique, and we refer to a choice of one potential
as a choice of gauge. The gauge symmetries of the pairs $(\varphi, u)$ are as follows. We assume that the arguments $\rho$ of $\varphi$ and
$x$ of $u$ are related by $x = \mu_h(e^{\rho/2})$.
\begin{itemize}
\item $\varphi \to \varphi  + c, u \to u -c$ for any $c \in \R$; \bigskip

\item $\varphi(\rho)  \to \varphi(\rho) + \vec b \cdot \rho,  x  \to x + \vec b =  \nabla ( \varphi(\rho) + \vec b \cdot \rho)\ \; \vec b \in \R^m$; \bigskip

\item $u(x) \to u(x) + \vec k \cdot x, \rho \to \rho + \vec k$. \bigskip

\item If we only choose \kahler potentials \eqref{OOKP} corresponding to Bergman kernels and torus-invariant sections, then $\vec b \in \Z^m$.

\end{itemize}

Let us illustrate the definitions and ambiguities with the \kahler potential, moment map and symplectic potential for the Fubini-Study metric of $\CP^m$.
In the case of $\ocal(1) \to \CP^1$, a standard choice for the 
Fubini-Study \kahler potential is $\varphi(z) =\log (1 + |z|^2) =
\log (1 + e^{\rho}) = \varphi (e^{\rho}) = \varphi(\rho)$ (with a little abuse of notation) and the moment map is $\mu_{FS}(\rho) = \frac{e^{\rho}}{1 + e^{ \rho}}.$
As in the introduction, the polytope is $[0,1]$ and  the correspondinng  symplectic  potential is $u_{FS}(x) = x \log x + (1 -x) \log (1 - x)$. 

However, other gauge choices are possible. If we allow all Chern classes and intervals, a  second choice of  \kahler potential is $\varphi_{r} (\rho) = r^2 \log \cosh \rho$. The corresonding 
 symplectic potential is,
$$u_{r}(x) = \half \left((r^2 + x) \log (r^2+ x) + (r^2 - x) \log (r^2 - x)\right),$$  where the parameter $r$ determines the radius of the corresponding round $S^2$, and
corresponding   polytope is   $[- r^2, r^2]$.
 The radius parametrizes  the cohomology class of the \kahler form. which is a translate of the centered polytope $[-\half, \half]$  with
$r^2 = \half$.

As a higher dimensional example, consider
the canonical bundle $\ocal(-(m+1)) \to \CP^m$ with  the Fubini-Study metric. The image of the moment map for this potential is $(m+1)$ times the standard unit simplex in $\R^m$. The volume form is given by $\omega_{FS}^m = (\frac{i}{2})^m \frac{\prod_{i=1}^m dz_i
\wedge d\bar{z}_i}{(1 + |z|^2)^{m+1}}$ and $- \log (1 + |z|^2)^{m+1}$ is a \kahler potential for the Ricci form. On the other hand, one may express the
volume form in terms of the invariant $(m,0)$ form $\prod_{i=1}^m \frac{dz_i}{z_i}$.  That changes the \kahler potential by $- \sum_j \log |z_j|$ and translates
the simplex by $-(1,1, \cdots, 1)$ so that it is centered at $0$.

\subsection{Fano toric \kahler manifolds}
Theorem \ref{MAX} pertains to Fano toric \kahler manifolds, namely those with positive anti-canonical bundle. The  polytope of a Fano \kahler manifold  has a preferred center
$x_0 \in P$ such that $\ell_j(x_0) = 1$ for all $j$. As explained in  \cite[Section 3.1]{D08}, this  follows because the wedge product of the
vector fields generating the torus  action is a meromorphic m-form on $M$  with a simple
pole along each of the divisors corresponding to the faces. Its  inverse is a
section of the anti-canonical bundle. The centre $x_0$
is also the centre of mass of $\partial P$ with its induced surface measure.
The center is $0$ if and only if the metric is \kahler-Einstein \cite{M87, WZ04}; equivalently, vanishing of the Futaki invariant is equivalent to the fact that
the preferred center is the center of mass.



\subsection{\label{SZEGOSECT} \szego (or, Bergman) kernels}

 The \szego (or Bergman) kernels of a positive Hermitian line
bundle $(L, h) \to (M, \omega)$  are the kernels of the orthogonal
projections $\Pi_{h^k}: L^2(M, L^k) \to H^0(M, L^k)$ onto the
spaces of holomorphic sections with respect to the inner product
$\Hilb_k(h)$,
\begin{equation}\label{Pik} \Pi_{h^k} s(z) = \int_M \Pi_{h^k}(z,w) \cdot s(w)
\frac{\omega_h^m}{m!}, \end{equation} where the $\cdot$ denotes
the $h$-hermitian inner product at $w$.
 In terms of a local frame  $e$  for $L \to M$ over an
open set $U \subset M$,  we may write sections as $s = f e$. If
$\{s^k_j=f_j e_L^{\otimes k}:j=1,\dots,N_k\}$ is  an orthonormal
basis for $H^0(M,L^k)$, then  the \szego kernel can be written in
the form
\begin{equation}\label{szego}  \Pi_{h^k}(z, w): = F_{h^k}
(z, w)\,e_L^{\otimes k}(z) \otimes\overline {e_L^{\otimes
k}(w)}\,,\end{equation} where
\begin{equation}\label{FN}F_{h^k}(z, w)=
\sum_{j=1}^{N_k}f_j(z) \overline{f_j(w)}\;, ~~~N_k = \dim H^0(M, L^k).\end{equation}
We also introduce the local kernel $B_k(z,w)$, defined with respect to the unitary frame: 
\begin{equation} \label{SzK} 
\Pi_{h^k}(z,w) = B_k(z,w) \cdot \frac{e_L^k(z)}{\|e^k_L(z)\|_h} \ot \overline{\frac{e_L^k(w)}{\|e^k_L(w)\|_h}} 
\end{equation}
The  {\it density of states} $\Pi_{h^k}(z)$  is the contraction of $\Pi_{k}(z,w)$ with the hermitian metric on the diagonal, 
\[\Pi_{h^k}(z): =   \sum_{i=0}^{N_k}
\|s^k_i(z)\|_{h_k}^2= F_{h^k}
(z, z)\,\left |e(z) \right |^{2k}_{h} =\ B_k(z,z), \]
where in the first equality we record a standard abuse of notation in which the diagonal of the
\szego kernel is identified with its contraction. 
On the diagonal, we have the following asymptotic expansion the density of states,
\begin{equation} \label{TYZ}  \Pi_{h^k}(z) =  k^m (a_0 + a_1 S(z) k^{m-1} + a_2(z) k^{m-2} +
\cdots )  \end{equation} where  $S(z)$ is the
scalar curvature of $\omega$. The leading order term $a_0 =1$ (see Section \ref{AG}) if $\Pi_{h^k}(z,w)$ is the \szego kernel relative to the volume form $dV = \frac{\omega^m}{m!}$.

 \subsection{Bergman kernels for a toric variety} In the
case of a toric variety, we have
\begin{equation}\label{FNa}F_{h^k}(z, w)=
\sum_{\alpha \in k P \cap \Z^m}  \frac{z^{\alpha}
\bar{w}^{\alpha}}{Q_{h^k}(\alpha)} \;,\end{equation}   where $Q_{h^k}(\alpha)$ is defined in \eqref{QFORM}. If we sift out
the $\alpha$th term of $\Pi_{h^k}$  by means of Fourier analysis
on $\T$, we obtain \eqref{PHK} .

Let $\wt \varphi(z,w)$ denote the almost extension of $\varphi(z)$ from the diagonal, that is $\wt \varphi$ satisfies the  condition $\dbar^k_z\wt   \varphi(z,w)|_{z=w} = \d^k_w \wt \varphi(z,w)|_{z=w} = 0$ for all $k \in \N$ and $\wt \varphi(z,w)|_{z=w} = \varphi(z)$. The $\T$ action is by holomorphic
isometries of $(M, \omega)$ and therefore \begin{equation} \label{EQUI} \wt \varphi(\Phi^{\vec t} z,\Phi^{\vec t} w) = \wt \varphi(z,w).  \end{equation}

The \szego kernel \eqref{SzK} admits a parametrix with complex phase
$\wt \varphi$.  In the case of a toric \kahler manifold,
it takes the following simple form \cite{STZ03}.

\begin{prop}\label{SZKTV}  For any hermitian  toric positive line bundle over a toric
variety, the \szego kernel for the metrics $h_{\varphi}^N$ have the
asymptotic expansions in a local frame on $M$,
$$B_{h^k}(z, w) \sim e^{k \left(\wt \varphi(z, w) - \frac{1}{2} (\varphi(z)
+ \varphi(w)) \right) } A_k(z,w) \;\; \mbox{mod} \; k^{- \infty},
$$ where $A_k(z,w) \sim k^m \left(1 + \frac{a_1(z,w)}{k} + \cdots\right) $ is a semi-classical symbol of order $m$ and where the phase satisfies
\eqref{EQUI}. \end{prop}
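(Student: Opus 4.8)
The plan is to prove Proposition~\ref{SZKTV} by combining the general Boutet de Monvel--Sj\"ostrand parametrix for the Szeg\H{o} kernel with the explicit monomial description of the toric Bergman kernel in \eqref{FNa}. First I would recall that for any positive line bundle the off-diagonal Szeg\H{o} kernel $B_{h^k}(z,w)$ admits an oscillatory-integral parametrix whose phase, after the standard Kuranishi-type normalization, is the almost-analytic extension $\wt\varphi(z,w)$ of the K\"ahler potential off the diagonal, with an amplitude $A_k(z,w)\sim k^m(1+a_1/k+\cdots)$ of order $m$; this is precisely the content of \cite{STZ03} (and the Zelditch/Catlin expansion), and on the diagonal it reduces to the TYZ expansion \eqref{TYZ}. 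Thus for the \emph{existence} of such an expansion there is essentially nothing new to prove; the real content of the toric statement is that the phase is literally $\wt\varphi(z,w)-\tfrac12(\varphi(z)+\varphi(w))$ (no remaining error in the exponent, only in the symbol) and that it is $\T$-invariant in the sense of \eqref{EQUI}.

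For the toric refinement I would argue directly from \eqref{FNa}. Writing $z=e^{\rho/2+i\theta}$, $w=e^{\sigma/2+i\eta}$, one has
\[
B_{h^k}(z,w)= e^{-\frac{k}{2}(\varphi(z)+\varphi(w))}\sum_{\alpha\in kP\cap\Z^m}\frac{e^{\langle\alpha,\,(\rho+\sigma)/2+i(\theta-\eta)\rangle}}{Q_{h^k}(\alpha)}.
\]
I would then apply the Laplace/saddle-point method to the lattice sum: rescaling $\alpha=k x$, using the asymptotics $Q_{h^k}(\alpha)\sim$ (Gaussian integral around the critical point $\rho_x=\mu_\varphi^{-1}(x)$) from \eqref{QFORM} together with the Legendre-duality identity \eqref{SYMPOTDEF2a}, the exponent of the summand becomes $k\bigl(\langle x,(\rho+\sigma)/2+i(\theta-\eta)\rangle - u_\varphi(x)\bigr)$ to leading order. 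The critical point in $x$ of this concave-minus-linear expression satisfies $\nabla u_\varphi(x) = (\rho+\sigma)/2 + i(\theta-\eta)$, i.e. $x$ is the (complexified) point whose Legendre dual is the midpoint-plus-imaginary-shift; substituting back, the critical value of the exponent is exactly $k\,\wt\varphi(z,w)$, since the Legendre-transform computation shows that the analytic continuation of $\rho\mapsto\varphi(\rho)$ evaluated at $(\rho+\sigma)/2+i(\theta-\eta)$ is precisely the almost-analytic extension $\wt\varphi$ off the diagonal (this is the toric avatar of the polarization identity, and it is where the holomorphy/$\dbar$-flatness conditions defining $\wt\varphi$ get used). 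The Gaussian fluctuation integral around this critical point, combined with the subleading terms in the $Q_{h^k}$ expansion, produces the semiclassical symbol $A_k(z,w)\sim k^m(1+a_1(z,w)/k+\cdots)$ of order $m$.

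The $\T$-equivariance \eqref{EQUI} of the phase is then immediate: the $\T$ action $z\mapsto\Phi^{\vec t}z$ is $\theta\mapsto\theta+\vec t$, which shifts $\theta-\eta$ by $\vec t-\vec t=0$, so the whole sum (hence its leading exponent $\wt\varphi$) is invariant; equivalently it follows a priori from the fact that $\T$ acts by holomorphic isometries fixing $\varphi$. I expect the main obstacle to be making rigorous the uniformity of the saddle-point analysis of the lattice sum $\sum_{\alpha\in kP\cap\Z^m}$: one must handle (a) the passage from the discrete sum to the integral with a controlled error (Poisson summation or an Euler--Maclaurin estimate, using that $P$ is Delzant so lattice points are well-distributed), (b) the behaviour as $(z,w)$ approaches the diagonal, where the critical point $x$ is real and the analysis must glue smoothly onto the on-diagonal TYZ expansion \eqref{TYZ} with $a_0=1$, and (c) the $k^{-\infty}$ error claim, which requires showing the contributions away from the saddle and from the boundary $\partial P$ are negligible to all orders. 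All of these are carried out (in the equivalent language of complexified moment maps and the plurisubharmonic envelope) in \cite{STZ03, SoZ07, SoZ12}, so in the write-up I would isolate the toric saddle-point computation of the phase as the new ingredient and cite those references for the uniform remainder estimates.
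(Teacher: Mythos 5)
The paper does not prove Proposition~\ref{SZKTV} at all: it is quoted verbatim from \cite{STZ03}, so there is no in-text argument to compare against. Your sketch is a correct reconstruction of the standard proof. The identification of the phase is right: with $z=e^{\rho/2+i\theta}$, $w=e^{\sigma/2+i\eta}$ the polarization of a torus-invariant potential is $\wt\varphi(z,w)=\varphi\bigl((\rho+\sigma)/2+i(\theta-\eta)\bigr)$, and since $\varphi$ is the Legendre transform of $u_\varphi$, the critical value of $x\mapsto \langle x,(\rho+\sigma)/2+i(\theta-\eta)\rangle-u_\varphi(x)$ is exactly this continuation; the bookkeeping of powers of $k$ (the factor $k^m$ from $\sum_\alpha\to k^m\int dx$, against the $k^{-m/2}$ from the Gaussian fluctuation integral and the $k^{-m/2}$ in the $Q_{h^k}$ asymptotics \eqref{QQ}) correctly yields a symbol of order $m$ with $a_0=1$, and the $\T$-invariance \eqref{EQUI} is immediate as you say. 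The one point to state more carefully in a write-up is that off the diagonal the critical point is genuinely complex, so ``Laplace/saddle-point'' must mean either a steepest-descent contour deformation (real-analytic $\varphi$) or the Melin--Sj\"ostrand almost-analytic machinery via the Boutet de Monvel--Sj\"ostrand parametrix (smooth $\varphi$); this is precisely what the ``mod $k^{-\infty}$'' qualifier and the almost-analytic extension $\wt\varphi$ are there to absorb, and you have correctly flagged the remaining uniformity issues (discrete-to-continuous error, behaviour near $\partial P$, gluing to the diagonal expansion \eqref{TYZ}) as the technical content delegated to \cite{STZ03, SoZ07, SoZ12}.
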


\subsection{\label{AG} Facts from algebraic geometry}

If $L^k$ is very ample (i.e. the vanishing theorem holds), then
$N_k: = \dim H^0(M, L^k)$ satisfies $$N_k +1 = \chi(L^k) = \int_M e^{k c_1(L)} Td(M) = a_0 k^m + a_1 k^{m-1} + \cdots + a_m,$$
where $$a_0 = \frac{1}{m!} \int_M c_1(L)^m, \;\; a_1 = \frac{1}{(2 (m-1)!} \int_M c_1(L)^{m-1} c_1(M). $$ 
Also, $[\omega] = 2 \pi c_1(L)$. Also
$$\int_M \Pi_{h^k}(z,z) dV_{\omega} = \dim H^0(M, L^k) = a_0 k^m + a_1 k^{m-1} + \cdots, $$
with $$a_0 = \rm{Vol}_{\omega}(M) = \int_M  \frac{\omega^m}{m!}, a_1 = \frac{1}{2 \pi} \int_M S(\omega) dV_{\omega}. $$
Here, $dV_{\omega} = \frac{\omega^m}{m!} = d\mu$.

\subsection{Asymptotic results on $\qcal_k(\alpha)$ and  $\pcal(\alpha, z)$}

In \cite[(23)]{SoZ12}, the  norming constants are expressed  in terms
 of the symplectic potential:
\begin{equation} \label{SPNORM} \QQ_{h^k}(\alpha) = \int_P e^{ k
(u_{0}(x) + \langle \frac{\alpha}{k} - x, \nabla u_{0}(x) \rangle}
dx. \end{equation} For  interior $\alpha$, and $\alpha_k$
with $|\alpha - \alpha_k| = O(\frac{1}{k})$,
\begin{equation} \label{QQ} \QQ_{h^k}(\alpha_k) \sim k^{-m/2} e^{ k
u_0
(\alpha)}, \end{equation} and for all  $\alpha$ and $\alpha_k$
with $|\alpha - \alpha_k| = O(\frac{1}{k})$,
\begin{equation} \label{QQa} \frac{1}{k} \log \QQ_{h^k}(\alpha_k) =
u_0
(\alpha) + O(\frac{\log k}{k}). \end{equation}

The weights $ \pcal_{h^k}(\alpha, z)$ \eqref{PHK} of the dilate
$\mu_k^{z,1}$ admit pointwise asymptotic expansions. 
The following is \cite[Lemma 5.1]{ZZ18}.

\begin{lem} \label{pcalLem} $\pcal_{h^k}(\alpha, z) = k^{m/2} (2\pi)^{-m/2}|\det  {\Hess}(u_{\varphi}(\frac{\alpha}{k})|^{\half} e^{- k I^z(\frac{\alpha}{k})} (1+O(1/k)), $
where $O(1/k)$ is uniform in $z, \alpha$. 
If $|\mu(z) - \frac{\alpha}{k} | = O(\frac{1}{k})$, then $$\pcal_{h^k}(\alpha, z) = k^{m/2} (2\pi)^{-m/2}|\det  {\Hess}(u_{\varphi}(\mu_h(z))|^{\half} e^{- k I^z(\frac{\alpha}{k})} (1+O(1/k)), $$
where $O(1/k)$ is uniform in $z, \alpha$.

\end{lem}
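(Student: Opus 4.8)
The plan is to reduce the statement to a single Laplace asymptotic for the norming constants $Q_{h^k}(\alpha)$ and then divide. Working in the log coordinates $z = e^{\rho/2 + i\theta}$ of the open orbit, $\T$-invariance of $\varphi$ and $|z^\alpha|^2 = e^{\langle \alpha,\rho\rangle}$ turn \eqref{PHK} into
\[ \pcal_{h^k}(\alpha, z) \;=\; \frac{|z^{\alpha}|^2 e^{-k\varphi(z)}}{Q_{h^k}(\alpha)} \;=\; \frac{e^{k(\langle \frac{\alpha}{k},\,\rho\rangle - \varphi(\rho))}}{Q_{h^k}(\alpha)} , \]
so everything comes down to an expansion of $Q_{h^k}(\alpha)$ with a Hessian-precise leading term and a relative error $O(1/k)$ uniform in $(z,\alpha)$.

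Next I would apply Laplace's method to the representation \eqref{SPNORM}, $Q_{h^k}(\alpha) = \int_P e^{k g_y(x)}\,dx$ with $y = \frac{\alpha}{k}$ and $g_y(x) = u_0(x) + \langle y - x, \nabla u_0(x)\rangle$. Differentiating, $\nabla g_y(x) = \nabla^2 u_0(x)\,(y-x)$, so the unique interior critical point is $x = y$, where $g_y(y) = u_0(y)$ and $\Hess g_y(y) = -\nabla^2 u_0(y) < 0$ by strict convexity of the symplectic potential on $P^o$. Laplace's method then yields
\[ Q_{h^k}(\alpha) \;=\; \Big(\tfrac{2\pi}{k}\Big)^{m/2} \big|\det \nabla^2 u_0(\tfrac{\alpha}{k})\big|^{-1/2}\, e^{k\,u_0(\alpha/k)}\,\big(1 + O(1/k)\big) . \]
(One may instead run Laplace directly on \eqref{QFORM} after integrating out $\theta$; the two computations agree via the Legendre duality $\nabla^2 u_0(x) = (\nabla^2\varphi(\rho_x))^{-1}$.) Dividing, and writing $\Hess u_\varphi = \nabla^2 u_0$,
\[ \pcal_{h^k}(\alpha, z) \;=\; k^{m/2}(2\pi)^{-m/2}\,\big|\det \Hess u_\varphi(\tfrac{\alpha}{k})\big|^{1/2}\; e^{k(\langle \frac{\alpha}{k},\rho\rangle - \varphi(\rho) - u_0(\frac{\alpha}{k}))}\,\big(1 + O(1/k)\big). \]

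It remains to recognize the exponent. By the definition \eqref{SYMPOTDEF} of $u_0$ as the Legendre transform of $\varphi$ one has $u_0(x) + \varphi(\rho) \ge \langle x,\rho\rangle$ with equality iff $x = \mu_h(e^{\rho/2})$; with $\rho = \rho(z)$ the nonnegative quantity $I^z(x) := u_0(x) - \langle x,\rho(z)\rangle + \varphi(\rho(z))$ is exactly the rate function of the large deviations principle of \cite{SoZ12} (a Bregman divergence of $\varphi$, vanishing precisely at $x = \mu_h(z)$). Hence the exponent equals $-k\,I^z(\alpha/k)$ and we obtain the first formula. For the refinement, if $|\mu_h(z) - \frac{\alpha}{k}| = O(1/k)$ then smoothness of $x \mapsto \det\Hess u_\varphi(x)$ on $P^o$ gives $\det\Hess u_\varphi(\frac{\alpha}{k}) = \det\Hess u_\varphi(\mu_h(z))\,(1 + O(1/k))$, and the discrepancy is absorbed into $(1 + O(1/k))$.

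The main obstacle is the uniformity of the $O(1/k)$ in $(z,\alpha)$. On compact subsets of $P^o$ this is the standard uniform remainder in Laplace's method, since $g_{\alpha/k}$ has a nondegenerate maximum with bounds uniform in the parameter; the delicate point is the region near $\partial P$, where $u_0$ has the logarithmic singularity $u_0 = \sum_r \ell_r \log \ell_r + f_0$ of \eqref{CANSYMPOT}. Here I would follow \cite{SoZ12, ZZ18}: factor off a model near each facet (a product of Fubini--Study / binomial factors, for which $Q_{h^k}$ is evaluated exactly through the Gamma function and Stirling's formula) from a smooth remainder, obtaining uniform control up to and including the boundary. This boundary analysis --- rather than the interior Laplace computation --- is the real content of \cite[Lemma 5.1]{ZZ18}.
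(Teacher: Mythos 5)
Your proof is correct and follows essentially the same route as the paper: both reduce the lemma to the steepest-descent/Laplace asymptotics of the norming constants $Q_{h^k}(\alpha)$ via the symplectic-potential representation \eqref{SPNORM} (the paper cites this step to \cite[Proposition 3.1]{SoZ07}, whereas you carry out the critical-point and Hessian computation explicitly), and then identify the resulting exponent with $-kI^z(\alpha/k)$ by Legendre duality. Your closing remarks about uniformity near $\partial P$ correctly locate where the genuinely nontrivial content of \cite[Lemma 5.1]{ZZ18} lies, which the paper likewise defers to the references.
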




\subsection{Probabilistic results}

  In \cite{SoZ12} the following is proved:


\begin{prop} \label{COMPAREPIT2} Let $(M, L, h, \omega)$ be a polarized toric Hermitian line bundle. Then the means, resp. variances of $\mu_k^z$ are given respectively by,

\begin{enumerate}

\item $m_k(z)=   \mu_h(z) + O(k^{-1});$

\item $\Sigma_k(z) =  k^{-1}  {\Hess}\; \varphi +
 O(k^{-2}) $.

\end{enumerate}
\end{prop}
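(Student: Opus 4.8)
The plan is to treat $\mu_k^z$ as a discretized Gaussian and extract its moments by a Laplace/saddle-point analysis of the defining sum. First I would record the shape of the weights: by Lemma \ref{pcalLem},
\[ \pcal_{h^k}(\alpha, z) = k^{m/2}(2\pi)^{-m/2}\,|\det \Hess u_\varphi(\tfrac{\alpha}{k})|^{1/2}\, e^{-k I^z(\alpha/k)}\,(1 + O(1/k)), \]
uniformly in $\alpha$ and $z$, where --- unwinding \eqref{PHK}, \eqref{SPNORM} and the Legendre duality \eqref{SYMPOTDEF2a} --- the rate function is $I^z(x) = u_\varphi(x) - \langle x, \rho_z\rangle + \varphi(\rho_z)$ with $\rho_z = \log|z|^2$. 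By the defining property of the Legendre transform $I^z \ge 0$, $I^z$ is convex, it vanishes exactly at $x_0 := \mu_h(z)$, and its Hessian there is $\Hess I^z|_{x_0} = \Hess u_\varphi(x_0) = (\Hess \varphi|_{\rho_z})^{-1}$. Thus $\pcal_{h^k}(\cdot,z)$ is a bump of height $\sim k^m$ concentrated near $x_0$ on the scale $k^{-1/2}$, and $\Pi_{h^k}(z) = \sum_\alpha \pcal_{h^k}(\alpha,z)$ is exactly its normalizing constant.

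Next I would localize and rescale. The large deviations principle of \cite{SoZ12} (with rate function $I^z$) gives $\mu_k^z(\{|x - x_0| > \delta\}) = O(e^{-c(\delta)k})$ for each $\delta > 0$, so up to exponentially small errors the sums for $m_k(z)$ and $\Sigma_k(z)$ may be restricted to $|\alpha/k - x_0| \le \delta$. On that ball I Taylor-expand $I^z(x) = \tfrac12\langle \Hess I^z(x_0)(x - x_0), x - x_0\rangle + O(|x - x_0|^3)$ and $|\det \Hess u_\varphi(x)|^{1/2} = |\det \Hess u_\varphi(x_0)|^{1/2}(1 + O(|x - x_0|))$. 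Substituting the rescaled lattice variable $t = \sqrt{k}(\alpha/k - x_0)$, which ranges over a lattice of spacing $k^{-1/2}$, each sum turns into a Riemann sum for a Gaussian integral in $t$ weighted by $e^{-\frac12\langle \Hess I^z(x_0)t, t\rangle}$; the Riemann-sum error, the cubic term $k\,O(|x - x_0|^3)$ (which is odd in $t$ and of size $O(k^{-1/2})$), the Jacobian variation, and the $O(1/k)$ of Lemma \ref{pcalLem} all enter as explicit multiplicative perturbations of this Gaussian.

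Finally I would read off the moments. For the mean, write $\alpha/k = x_0 + (\alpha/k - x_0)$; the constant part contributes $x_0$ exactly because the weights sum to $1$, and $\frac{1}{\Pi_{h^k}(z)}\sum_\alpha(\alpha/k - x_0)\,\pcal_{h^k}(\alpha, z)$ vanishes to leading order by evenness of the Gaussian --- the odd cubic correction pairs with an odd integrand and first survives at order $(k^{-1/2})^2 = k^{-1}$, and every remaining correction is likewise $O(k^{-1})$ --- so $m_k(z) = \mu_h(z) + O(k^{-1})$. For the covariance, $\Sigma_k(z) = \frac{1}{\Pi_{h^k}(z)}\sum_\alpha(\alpha/k - x_0)^{\otimes 2}\,\pcal_{h^k}(\alpha, z) - (m_k(z) - x_0)^{\otimes 2}$, and the subtracted term is $O(k^{-2})$ by part (1); the leading Gaussian second moment in $\alpha/k - x_0 = t/\sqrt{k}$ is $k^{-1}(\Hess I^z(x_0))^{-1} = k^{-1}\Hess \varphi|_{\rho_z}$, and the relative $O(k^{-1})$ corrections contribute the stated $O(k^{-2})$. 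The main technical obstacle is the uniform (in $z$) control of the two genuinely asymptotic inputs --- the exponential LDP tail bound and the Euler--Maclaurin / Poisson-summation estimate measuring the gap between the lattice sum over $kP \cap \Z^m$ and the Gaussian integral --- together with verifying that the cubic Taylor term of $I^z$ and the $O(1/k)$ of Lemma \ref{pcalLem} perturb the first moment only at order $k^{-1}$ and the second only at order $k^{-2}$; the rest is bookkeeping.
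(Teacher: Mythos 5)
Your argument is correct, and it is essentially the standard Laplace/local-CLT analysis that the paper itself deploys in the detailed proof of Theorem \ref{H} (Section \ref{SECONDSECT}): localize via the LDP, Taylor-expand $I^z$ about $x_0=\mu_h(z)$, rescale by $\sqrt{k}$, and read off Gaussian moments, with the parity bookkeeping for the cubic and Jacobian corrections done correctly (the odd $k^{-1/2}$-corrections kill the naive $k^{-1/2}$ error in the mean and the naive $k^{-3/2}$ error in the covariance). Note, however, that the paper does not prove Proposition \ref{COMPAREPIT2} at all --- it is quoted from \cite{SoZ12} --- and the quickest route using only tools stated in this paper is the Bernstein-polynomial identity $B_{h^k}f(x)=\int_P f\,d\mu_k^z$ together with the expansion \eqref{BERN}: taking $f(y)=y$ gives $m_k(z)=x+\lcal_1(\mathrm{id})(x)k^{-1}+\cdots=\mu_h(z)+O(k^{-1})$, and taking $f(y)=(y-x)\otimes(y-x)$, which vanishes to second order at $x$ so that only the $\nabla\mu_h\cdot\nabla^2 f$ part of $\ncal_1$ survives, gives $\Sigma_k(z)+(m_k(z)-x)^{\otimes 2}=k^{-1}\,{\Hess}\,\varphi+O(k^{-2})$, whence the covariance statement follows from part (1). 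Your approach buys a self-contained proof from Lemma \ref{pcalLem} alone; the Bernstein route buys brevity at the cost of invoking the full expansion \eqref{BERN}, whose proof is itself the same saddle-point analysis you carry out.
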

Moreover, the measures $\mu_k^z$ satisfy a weak law of large numbers; see \eqref{WEAK}.

Let $h = e^{-\varphi}$ be a toric Hermitian metric on $L$. Recall that 
the {\it symplectic potential} $u_{\varphi}$ associated to $\varphi$
 is its Legendre transform: for $x \in P$ there
is a unique $\rho(x)$ such that $\mu_{\varphi}(e^{\rho(x)/2}) =d\varphi (\rho(x))= x$. If $z = e^{\rho/2 + i \theta}$ then we write
$\rho_z = \rho = \log |z|^2$. Then the Legendre transform is defined to
be the convex function
\begin{equation} \label{SYMPOTDEF2} u_{\varphi}(x) = \langle x,  \rho(x) \rangle -
\varphi(\rho(x)).
\end{equation} 
Also define
  \begin{equation} \label{Ikzdef} I^z(x) = u_{\varphi} (x) -
\langle x, \rho_z \rangle + \varphi (\rho_z).\end{equation} 
Then $I^z(x)$ is a convex function on $P$ with a minimum of value $0$ at $x = \mu_h(z)$ and with Hessian that of $u_{\varphi}$.

\subsection{\label{LDSECT} Large deviations}

In  \cite{SoZ12} it is proved that the measures $\mu_k^z$ satisfy a large deviations principle with speed $k$ and a rate
function $I^z$. 
The rate functions $I^z$ for $\{d\mu_k^z\}$ depend on whether $z$ lies
 in the open orbit $M^o$ of $M$ or on the divisor at infinity $\dcal$. The following is proved in \cite{SoZ12}.

\begin{theo}\label{LDINTRO}  For any  $z \in M$,  the probability measures
$\mu_k^z$  satisfy a uniform Laplace  large deviations principle
with rate $k$ and with convex rate functions $I^z \geq 0$ on $P$
defined as follows:

\begin{itemize}

\item If $z \in M^0$, the open orbit, then  $I^z(x) = u_0(x) -
\langle x, \log |z| \rangle + \phip (z),$  where $\phip$ is the
canonical \kahler potential of the open orbit and $u_0$ is its
Legendre transform, the  symplectic potential;

\item When $z \in \mu_0^{-1}(F)$ for some face $F$ of $\partial
P$, then $I^z(x)$ restricted to $x \in F$ is given by
 $I^z(x) =  u_F(x) - \langle x', \log |z'| \rangle + \phi_F(z),$
 where $\log |z'|$ are orbit coordinates along $F$,  $\phi_F$ is
 the canonical
 \kahler potential for the subtoric variety defined by $F$ and $u_F$ is its Legendre transform.
  On the
 complement of $\bar{F}$ it is defined to be $+\infty$.

 \item When $z $ is a fixed point then $I^z(v) = 0$ and elsewhere
 $I^z(x) = \infty$.

\end{itemize}
 \end{theo}

The   local asymptotics of Lemma \ref{pcalLem} (due to  \cite[Lemma 5.1]{ZZ18}) are derived from this large deviations principle.

\subsection{\label{BERNSTEINSECT} Bernstein polynomials and associated measures}

One approach to entropy of the measures $\mu_k^z$ is to recognize their relation to Bernstein polynomials \cite{Z09}.
The 
Bernstein polynomials of a continuous function $f \in C(\overline{P})$ of a general toric \kahler manifold are quotients \begin{equation}
\label{QUOTIENT} B_{h^k}(f)(x) = \frac{\ncal_{h^k}
f(x)}{\Pi_{h^k}(\mu_h^{-1}(x), \mu_h^{-1}(x))}
\end{equation} of a {\it numerator polynomial} $\ncal_{h^k} f(x)$
by  the denominator $\Pi_{h^k}(z,z)$ with $\mu_h(z) = x$. Here,
$\mu_h$ is the moment map associated to the \kahler form
$\omega_h$ associated to $h$, and 
$$
\begin{array}{lll}  
   \ncal_{h^k} f(x)  &&=  \sum_{\alpha \in kP \cap \Z^m}
 f(\frac{\alpha}{k}) \frac{ e^{k
  \left(u_{\phi} (x) + \langle
\frac{\alpha}{k} - x, \nabla u_{\phi}(x) \rangle \right)}
}{Q_{h^k}(\alpha)}
 . \end{array}
 .$$
 The numerator polynomials as well as the denominator   admit
complete asymptotic expansions: 
  there
exist differential operators $\ncal_j$,   such that
$$ \ncal_{h^k}(f)(x)
\sim  \frac{k^m}{\pi^m}\left(   f(x)  + k^{-1} \ncal_1 f(x)  + \cdots \right),$$ where the
operators $\ncal_j$ are computable from the Bergman kernel
expansion for $\Pi_{h^k}(z,z)$. In particular,
$$\ncal_1 f(x) = \frac{1}{2} \left( f(x) S(\mu_h^{-1}(x) ) + \nabla
\mu_h (\mu_h^{-1}(x)) \cdot \nabla^2 f(x) \right), $$
where $S(z)$ is the scalar curvature of the \kahler metric $\omega_h$.
Combining the asymptotics of the numerator and denominator produces the asymptotics for $f \in C^{\infty}(\overline{P})$.
\begin{equation} \label{BERN}  B_{h^k}(f)(x) = f(x) + \lcal_1 f(x) k^{-1} + \lcal_2 f(x) k^{-2} + \cdots + \lcal_m f(x) k^{-m}
+ O(k^{- m - 1}), \end{equation} in $C^{\infty}(\bar{P})$, where $\lcal_j$ is
a differential operator of order $2j$ depending only on curvature
invariants of the metric $h$; the expansion may be differentiated
any number of times.

The relevance of Bernstein polynomials to the measures $\mu_k^z$ is the following easily verifiable formula \cite{Z09}: if 
$x = \mu_{\phi}(z)$ and
let $h = e^{- \phi}$, then 
 $$ \begin{array}{lll} B_{h^k}
  f(x) &= & \int_P f(y) d\mu^z_k(y).\end{array}.$$
   It follows that, for any $f \in C(\overline{P})$, 
\begin{equation}\label{WEAK}  \lim_{k \to \infty} \int_P f(y) d\mu_k^z(y) = f(\mu(z)).  \end{equation}

\section{Proof of Theorem \ref{H} on  entropy asymptotics }
In this section, we prove Theorem \ref{H}. Since it is a rather technical calculation, we first give a detailed outline using prior results on the large deviations principle reviewed in Section \ref{LDSECT}, and 
on Bernstein polynomials (reviewed in Section \ref{BERNSTEINSECT}). We then give a self-contained proof in Section \ref{SECONDSECT}. The outline is quite
detailed and helps as a guide to the self-contained proof.

\subsection{Sketch of proof}  The entropy of $\mu_k^z$ is given explicitly in \eqref{ENTFORM}.
The weights $ \pcal_{h^k}(\alpha, z)$ \eqref{PHK} of the dilate
$\mu_k^{z,1}$ admit pointwise asymptotic expansions in Lemma \ref{pcalLem}. We assume that $\mu(z) \in P^o$, so that
$ I^z(x) = u_0(x) -
\langle x, \log |z| \rangle + \phip (z).$  Unravelling the logarithm in \eqref{ENTFORM}  gives,  
\begin{equation}\label{H2}  \begin{array}{lll} H(\mu_k^z) & = &- \frac{1}{\Pi_{h^{k }}(z,z)} \sum_{\alpha \in k P \cap \Z^m} 
 \pcal_{h^k}(\alpha, z) \left( \log \pcal_{h^k}(\alpha, z) - \log \Pi_{h^{k }}(z,z) \right).
 \end{array} \end{equation}
 By \eqref{TYZ}, 
 \begin{equation} \label{1} (I) \;\;\;
   \frac{1}{\Pi_{h^{k }}(z,z)} \sum_{\alpha \in k P  \cap \Z^m} 
 \pcal_{h^k}(\alpha, z) \log \Pi_{h^{k }}(z,z) \sim   \log (a_0 k^m) + O(\frac{1}{k}) = m \log k + O(\frac{1}{k}). \end{equation}
 
 Therefore it suffices to determine the asymptotics of the first term of \eqref{H2}, 
 $$- \frac{1}{\Pi_{h^{k }}(z,z)} \sum_{\alpha \in k P  \cap \Z^m} 
 \pcal_{h^k}(\alpha, z) \log \pcal_{h^k}(\alpha, z) . $$
 By Lemma \ref{pcalLem}, 
 \begin{equation}\label{LOG} \log \pcal_{h^k}(\alpha, z) = \log (k^{m/2} (2\pi)^{-m/2}) + \log |\det  {\Hess}(u_{\varphi}(\mu_h(z))|^{\half}  - k I^z(\frac{\alpha}{k}) + O(1/k)). \end{equation}
 Since the first two terms are independent of $\alpha$, we obtain a second term,
 \begin{equation} \label{2} \begin{array}{l} (II) \;\;  - \frac{1}{\Pi_{h^{k }}(z,z)} \sum_{\alpha \in k P  \cap \Z^m} 
 \pcal_{h^k}(\alpha, z) \left(\log (k^{m/2} (2\pi)^{-m/2}) + \log |\det  {\Hess}(u_{\varphi}(\mu_h(z))|^{\half} \right) \\ \\= - \log (k^{m/2} (2\pi)^{-m/2}) - \log |\det  {\Hess}(u_{\varphi}(\mu_h(z)))|^{\half} + O(\frac{1}{k}). \end{array} \end{equation}

 Thus, we are left with the third term,
 \begin{equation} \label{3}  \begin{array}{l} (III) \;\;  \frac{1}{\Pi_{h^{k }}(z,z)} \sum_{\alpha \in k P  \cap \Z^m} 
 \pcal_{h^k}(\alpha, z)  (k I^z(\frac{\alpha}{k})).  \end{array} \end{equation} 
We obtain  asymptotics for this term using the asymptotics of   Bernstein polynomials as reviewed above . To make this connection, 
we  define a function $f_z$
so that $$f_z(\frac{\alpha}{k}) =  I^z(\frac{\alpha}{k}). $$
Both sides extend with no complication from the lattice points $\frac{\alpha}{k}$ to all $x \in P^o$. 
By Theorem \ref{LDINTRO}, it  follows that term (III) is, up to errors of order $O(\frac{1}{k})$,  the Bernstein polynomial
for  $$f_z(x) =   I^z(x) = u_0(x) -
\langle x, \log |z| \rangle + \phip (z). $$
Note that since $u$ and $\phi$ are Legendre transforms, one has
$$u(x) + \phi(\rho) = \langle x, \rho \rangle, \;\; x = \mu(e^{\rho}). $$ By \eqref{BERN},
 the leading  term in the asymptotic expansion of $(III)$  is $0$. Since this term is multiplied by $k$, this signals
 that $(III) = O(1)$. Since $f(\mu(z)) = 0$, the  leading order asymptotics is given by the second term, 
 \begin{equation} \label{3.0} (III.0) \;\;\ncal_1 f(x) = \frac{1}{2} \left( f(x) S(\mu_h^{-1}(x) ) + \nabla
\mu_h (\mu_h^{-1}(x)) \cdot \nabla^2 f(x) \right) =  \nabla
\mu_h (\mu_h^{-1}(x)) \cdot \nabla^2 I^z(\mu(z)) . \end{equation}
However, $\nabla^2 I^z = \nabla^2 u_0$, so the last term is $\rm{Tr}(I_m) = m$. 
 

Adding the contributions of \eqref{1}-\eqref{2}-\eqref{3.0} gives \begin{equation} \label{FINALb} \begin{array}{lll} H(\mu_k^z) & = &  (I) + (II) + (III.0) \\ && \\& = & 
  \log ( k^m)   - \log (k^{m/2} (2\pi)^{-m/2}) - \log |\det  {\Hess}(u_{\varphi}(\mu_h(z))|^{\half}  +  \frac{m}{2}  +  O(\frac{1}{k}) \\&&\\& = &
\frac{m}{2}(1 + \log (2 \pi k)) - \log |\det  {\Hess}(u_{\varphi}(\mu_h(z))|^{\half}  +  O(\frac{1}{k}) \end{array}, \end{equation}
agreeing with the formula of Theorem \ref{H}.

\begin{rem} Above, we used that $a_0 = 1$ to simplify the first term. \end{rem}

\subsection{\label{SECONDSECT} A more detailed proof} We now give a more detailed proof without using prior results on Bernstein polynomials.

Let $ \tilde Q_k(y):= e^{k u_0(y)} \int_P e^{k( u_0(x) - u_0(y) + < \nabla u_0 (x), y - x >)} dx$

\begin{lem}
For all neighborhoods of the boundary of the polytope U we have a uniform equivalent outside of $U$,
$ \tilde Q_k(y) = (2\pi)^{m/2} \lvert \det \nabla^2 u_0 (y) \lvert^{-1/2}  k^{m/2} e^{k u_0(y)} ( 1 + \frac{ m_k(y)}{k^{1/2-\epsilon}} ) $ 
where $ \sup_{k \in \mathbb{N}, y \in P-U} \lvert m_k(y) \lvert < \infty $
\end{lem}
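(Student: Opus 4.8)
The quantity $\tilde Q_k(y)$ is, after pulling out the $e^{ku_0(y)}$ factor, a Laplace-type integral over the polytope $P$ of $e^{k\Psi_y(x)}$ where $\Psi_y(x) := u_0(x) - u_0(y) + \langle \nabla u_0(x), y - x\rangle$. The plan is to analyze this by the Laplace (stationary-phase) method, paying attention to uniformity in $y$ over $P \setminus U$. First I would locate the critical point of $\Psi_y$ in $x$: since $\nabla_x \Psi_y(x) = \nabla u_0(x) + \nabla^2 u_0(x)(y-x) - \nabla u_0(x) = \nabla^2 u_0(x)(y-x)$, and $\nabla^2 u_0$ is positive definite on $P^o$, the unique critical point is $x = y$, with $\Psi_y(y) = 0$ and $\nabla^2_x \Psi_y(y)$ equal to (a derivative-of-the-Hessian correction plus) $-\nabla^2 u_0(y)$ — more carefully, one computes $\partial_{x_i}\partial_{x_j}\Psi_y(x)\big|_{x=y}$ using $\partial_{x_i}\big(\nabla^2 u_0(x)(y-x)\big)_j$, and at $x=y$ the terms involving $(y-x)$ drop, leaving $-\nabla^2 u_0(y)$. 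So $\Psi_y$ has a nondegenerate interior maximum at $x=y$ with Hessian $-\nabla^2 u_0(y)$, which already explains the claimed constant $(2\pi)^{m/2}|\det \nabla^2 u_0(y)|^{-1/2}$ and the power $k^{m/2}$.

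Second, I would split the integral into a small ball $B(y,\delta)$ and its complement. On $B(y,\delta)$, Taylor expand $\Psi_y(x) = -\tfrac12 \langle \nabla^2 u_0(y)(x-y), x-y\rangle + O(|x-y|^3)$ and apply the standard Gaussian-integral estimate with a cubic error; rescaling $x - y = k^{-1/2}w$ turns the cubic remainder into a term of size $k^{-1/2}|w|^3$ in the exponent, and expanding $e^{O(k^{-1/2}|w|^3)} = 1 + O(k^{-1/2}|w|^3)$ produces the correction term of relative size $k^{-1/2+\epsilon}$ after accounting for the region $|w| \lesssim k^\epsilon$ where the Gaussian is not yet negligible. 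The tail $|x-y| > \delta$ is controlled because $\Psi_y(x) < 0$ there with a gap bounded away from $0$; this is where uniformity matters.

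\textbf{Main obstacle.} The crux is uniformity in $y$ as $y$ ranges over $P \setminus U$: one needs (i) a uniform lower bound on the smallest eigenvalue of $\nabla^2 u_0(y)$ and a uniform upper bound on $\|\nabla^3 u_0\|$ on a fixed neighborhood of each such $y$ inside $P^o$, and (ii) a uniform strictness estimate $\Psi_y(x) \le -c(\delta)$ for $|x-y|\ge\delta$, including when $y$ is near $\partial P$ (but outside $U$) and $x$ runs toward $\partial P$, where $u_0$ has its logarithmic boundary singularities $\sum_k \ell_k \log\ell_k$. Since $U$ is an arbitrary neighborhood of $\partial P$, on $P\setminus U$ we stay a fixed distance from the singular locus, so $u_0 \in C^\infty$ with bounds depending only on $U$; the function $\Psi_y$ is convex-like in the sense that $-\Psi_y(x)$ is essentially the Bregman divergence of $u_0$ between $x$ and $y$, which is strictly positive for $x \ne y$ and grows at least quadratically near $y$ and at least linearly (via the log terms) toward $\partial P$, giving the needed gap. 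I would also need to check that the $O(|x-y|^3)$ Taylor remainder constant is uniform, which again follows from the uniform $C^3$ bound on $P\setminus U$. Assembling these, the stationary-phase expansion gives exactly the stated form with $\sup_{k,\,y\in P\setminus U}|m_k(y)| < \infty$.
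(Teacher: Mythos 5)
Your proposal is correct and follows essentially the same route as the paper: a Laplace/steepest-descent analysis of the Bregman-divergence exponent $f_y(x)=u_0(y)-u_0(x)+\langle \nabla u_0(x),x-y\rangle$, with a ball-plus-tail decomposition, a Gaussian approximation whose cubic Taylor remainder produces the $k^{-1/2+\epsilon}$ relative error, and compactness of $P\setminus U$ in $P^o$ to get uniform $C^3$ bounds and a uniform positive gap on the tail. The only cosmetic difference is that you use a fixed ball radius followed by the rescaling $x-y=k^{-1/2}w$ restricted to $|w|\lesssim k^{\epsilon}$, whereas the paper shrinks the ball directly with $\delta_k=\epsilon k^{-\alpha}$, $\alpha\in(\tfrac13,\tfrac12)$; these are equivalent.
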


\begin{proof}
let $f_y(x) =  u_0(y) - u_0(x) + < \nabla u_0 (x), x -y  >$. It is a positive function that attain 0 only once in $y$ and whose Hessian is $\nabla^2 u_0\lvert_y$ at $y$.

\begin{align} \tilde Q_k(y) &=  e^{k u_0(y)} \int_P e^{-k f_y(x)} dx \\ &= e^{k u_0(y)}  ( \int_{B(y,\delta_k)} e^{-k (\nabla^2 u_0\rvert_y(x-y , x-y) + \delta f(x))}  dx + Ke^{-k \inf_{P - B(x,\delta_k)}f_y(x)}). 
\end{align}

Let $m_y^\delta := \frac{\inf_{P - B(x,\delta)}f_y(x)}{2 \delta^2}$ and $M_y^\delta := \frac{f_y(x) - \nabla^2 u_0(x-y , x-y)}{6 \delta^3}$

We have two bounds on $\tilde Q_k(y)$
$$\begin{array}{lll} 
\int_{B(y,\delta_k)} e^{-\frac{k}{2} \nabla^2 u_0(x-y , x-y) - k \delta_k^3 M_y^{\delta_k} } dx 
\leq \tilde Q_k(y)e^{-k u_0(y)} 
& \leq & \int_{B(y,\delta_k)} e^{-\frac{k}{2} \nabla^2 u_0(x-y , x-y) + k \delta_k^3 M_y^{\delta_k} }  dx \\&& \\
 &+&  \Vol(P)e^{-k \delta_k^2 m_y^{\delta_k}}
\end{array}$$

Now changing the variables in the integral leads to

$$\begin{array}{lll} 
\int_{B(0,\sqrt{k}\delta_k)} e^{-\frac{1}{2} \nabla^2 u_0(z,z) - k \delta_k^3 M_y^{\delta_k} } k^{m/2}dz 
& \leq & \tilde Q_k(y)e^{-k u_0(y)}
\leq \int_{B(0,\sqrt{k} \delta_k)} e^{-\frac{1}{2} \nabla^2 u_0(z,z) + k \delta_k^3 M_y^{\delta_k} }  k^{m/2}dz \\ &&\\ && \ + \ \Vol(P)e^{-k \delta_k^2 m_y^{\delta_k}}
\end{array}$$

In order for the whole term to converge we need to choose $\delta_k$ to carefully. If we choose $\delta_k = \epsilon k^{-\alpha}$ with $\alpha \in ( \frac{1}{3} , \frac{1}{2} ) $ we'll obtain a exponential rate of convergence. 

More over as $k$ goes to infinity, $ \int_{B(0,\sqrt{k} \delta_k)} e^{-\frac{1}{2} \nabla^2 u_0(z,z)} = \sqrt{\det 2\pi (\nabla^2 u_0)^{-1}} ( 1 + N_k (\sqrt{k} \delta_k)^{m-2} e^{-\frac{(\sqrt{k} \delta_k)^2}{2}}) $ with $(N_k)_{k\in \mathbb{N}}$ a bounded sequence.

Now we have the following sandwich :

\[
\begin{split}
e^{-k \delta_k^3 M_y^{\delta_k}}( 1 + & N_k (\sqrt{k} \delta_k)^{m-2} e^{-\frac{(\sqrt{k} \delta_k)^2}{2}}) \leq \\
& \frac{Q_k(y) \sqrt{\det \nabla^2 u_0 (y)}}{(2\pi)^{\frac{m}{2}}  k^{m/2} e^{k u_0(y)}}
\leq e^{k \delta_k^3 M_y^{\delta_k}}( 1 + N_k (\sqrt{k} \delta_k)^{m-2} e^{-\frac{(\sqrt{k} \delta_k)^2}{2}} + N_k k^{-\frac{m}{2}} e^{-k \delta_k^2 m_y^{\delta_k}}) \\
\end{split}
\]

The first vanishing term comes from the term in $ e^{k \delta_k^3 M_y^{\delta_k}} $
The final equivalent is of the form : 
\[
\frac{Q_k(y)\sqrt{\det \nabla^2 u_0 (y)}}{(2\pi)^{\frac{m}{2}}  k^{m/2} e^{k u_0(y)}} = 1 + c_k^y k \delta_k^3 = 1 + c_k^y k^{1 -3\alpha} = 1 + c_k^y k^{-\frac{1}{2}+\epsilon} 
\]

With $(c_k^y)_{k\in\mathbb{N}}$ a bounded sequence. In order to prove that $(c_k^y)_{k\in\mathbb{N}}$ is uniformly bounded over $P-U$, we just need to show that $m_y^\delta$ and  $M_y^\delta$ are uniformly bounded. For any neighborhood $U$ of the boundary of $P$, $P-U$ is a compact set where $u_0$ is $C^\infty$ and so where  $m_y^\delta$ and  $M_y^\delta$ are uniformly bounded.
\end{proof}
 
\subsection{Computation of the entropy}

Now that we proved this technical lemma, we'll use it and the asymptotics of the \Szego kernel $ \Pi(z,z) = k^m + O(k^{m-1})$ to obtain the following uniform asymptotic for the individual probabilities of the sequence of measures :
Let's take any neighborhood of the boundary $U$ such that $\mu(z)$ is in the interior of $P-U$. 
We have that $\forall \alpha \in P-U $ such that $k\alpha \in kP \cap \mathbb{Z}^m$ for a certain $k$. 
\[
\mu_k^z(\alpha) = \frac{ \lvert z^{\alpha} \rvert^2_{h^k}}{Q_k(\alpha) \Pi_k(z,z)} = \frac{\sqrt{\det \nabla^2 u_0 (y)}}{{(2\pi k)^{-m/2}}} e^{-k I^z(\alpha)} \bigg(1+\frac{c_\alpha} {k^{1/2 - \epsilon}} \bigg)
 \]
 
 With $ \lvert c_{\alpha} \lvert \leq M $ and $M$ only depending on $U$.
 
 Let's split the calculation in two :
\[
H(\mu_k^z) = - \sum_{p \in U} \mu_k^z(p) \log (\mu_k^z(p)) - \sum_{p \in P-U} \mu_k^z(p) \log (\mu_k^z(p))
\]

\begin{lem}
The first term goes to zero
\end{lem}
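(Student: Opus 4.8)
The plan is to show that the contribution to the entropy from lattice points near the boundary divisor is negligible, i.e.\ that $-\sum_{p\in U}\mu_k^z(p)\log\mu_k^z(p)\to 0$ as $k\to\infty$, by choosing the neighborhood $U$ of $\partial P$ first and then exploiting the exponential decay of $\mu_k^z$ away from $\mu(z)$. The key quantitative input is the large deviations estimate: since $\mu(z)$ lies in the interior of $P\setminus U$, there is a constant $c=c(U,z)>0$ with $I^z(x)\geq c$ for all $x\in U$, because $I^z$ is convex, nonnegative, and vanishes only at $\mu(z)$. Thus every weight indexed by $\alpha/k\in U$ satisfies $\mu_k^z(\alpha)\leq C k^{m/2}e^{-ck}$ for a uniform constant $C$ (using Lemma~\ref{pcalLem} together with $\Pi_{h^k}(z,z)\sim k^m$, and keeping in mind that we only need an upper bound on the individual masses, which remains valid up to $\partial P$ from the LDP of Theorem~\ref{LDINTRO}).

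The main steps are then: first, record that $\#(kP\cap\Z^m)=O(k^m)$, so the sum over $p\in U$ has at most $O(k^m)$ terms. Second, on each such term use that the function $t\mapsto -t\log t$ is increasing on $[0,e^{-1}]$, so for $k$ large (when all the boundary masses are below $e^{-1}$) we have $-\mu_k^z(p)\log\mu_k^z(p)\leq -Ck^{m/2}e^{-ck}\log(Ck^{m/2}e^{-ck})$. Third, expand this: $-\log(Ck^{m/2}e^{-ck}) = ck - \tfrac m2\log k - \log C = O(k)$, so each term is $O(k^{m/2}e^{-ck}\cdot k) = O(k^{m/2+1}e^{-ck})$. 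Summing over the $O(k^m)$ boundary lattice points gives a total bound of $O(k^{3m/2+1}e^{-ck})$, which tends to $0$ since the exponential dominates any polynomial. Assembling these estimates yields the claim.

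The one point requiring a little care is the monotonicity argument for $-t\log t$: one must ensure the bound $\mu_k^z(p)\leq C k^{m/2}e^{-ck}$ is genuinely below $e^{-1}$ for all $p\in U$ once $k$ is large, which is automatic since $k^{m/2}e^{-ck}\to 0$; and one must use that the entropy summand is nonnegative (each $\mu_k^z(p)\in(0,1)$) so that discarding or bounding terms only in one direction is legitimate. I expect the only genuine obstacle to be making the constant $c$ in $I^z(x)\geq c$ on $U$ explicit and uniform — but this is immediate from compactness of $\overline{U}\cap P$ together with the strict positivity of $I^z$ off its unique minimum $\mu(z)\notin \overline{U}$, so in fact the whole lemma is a short estimate. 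An alternative, essentially equivalent, route is to bound $-\sum_{p\in U}\mu_k^z(p)\log\mu_k^z(p)$ by $\big(\sum_{p\in U}\mu_k^z(p)\big)\cdot\big(\text{upper bound on }-\log\mu_k^z(p)\big) \leq \big(C k^{3m/2}e^{-ck}\big)\cdot O(k)$, using that the total boundary mass is itself exponentially small; I would present whichever of these is cleanest given the constants already fixed in the preceding lemma.
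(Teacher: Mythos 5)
Your argument is correct and reaches the same conclusion, but the mechanics differ from the paper's. The paper splits the boundary contribution via the conditional-entropy identity $-\sum_{p\in U}\mu_k^z(p)\log\mu_k^z(p) = -\mu_k^z(U)\log\mu_k^z(U) + \mu_k^z(U)H(\nu_k)$, where $\nu_k$ is $\mu_k^z$ conditioned on $U$, and then needs only two facts: the universal bound $H(\nu_k)\le \log\#\,\supp(\nu_k)=O(\log k)$ and the exponential decay of the \emph{total} mass $\mu_k^z(U)$ supplied by the LDP. You instead bound each summand individually, via a pointwise bound $\mu_k^z(p)\le Ck^{m/2}e^{-ck}$ combined with monotonicity of $-t\log t$ on $(0,e^{-1}]$, and then sum over the $O(k^m)$ lattice points; this buys nothing extra but is perfectly valid, and the paper's version is marginally more economical in that it never requires pointwise control of the masses. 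One caution about your pointwise input: Lemma~\ref{pcalLem} carries the prefactor $|\det \Hess (u_{\varphi}(\alpha/k))|^{1/2}$, which blows up as $\alpha/k\to\partial P$ (the symplectic potential has logarithmic singularities there), so the constant $C$ in your bound is not uniform over all of $U$ if taken from that expansion. This is harmless --- the blowup is at most polynomial in $k$ for lattice points of $kP$, and, more simply, each individual mass is dominated by the total mass $\mu_k^z(U)\le e^{-ck/2}$ obtained directly from the LDP upper bound of Theorem~\ref{LDINTRO} applied to the closed set $\overline{U}$ --- but you should route the estimate through one of these observations rather than through the asymptotic expansion of Lemma~\ref{pcalLem}, which is an interior statement.
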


\begin{proof}
 Let $\nu_k := \frac{\mu_k^z(\cdot)\textbf{1}_{\cdot \in U}}{\mu_k^z(U)}$. Then 
\[
- \sum_{p \in U} \mu_k^z(p) \log (\mu_k^z(p)) =  - \mu_k^z(U) \log ( \mu_k^z(U) ) +  \mu_k^z(U) H(\nu_k)
\]
Except that $\nu$ is concentrated on a finite number of points that is equal to $ \lvert U \lvert k^{m} + o(k^{d}) $, meaning that $H(\nu_k) \leq d \log(k) + constant$ and $\mu_k^z(U)$ decrease exponentially due to the LDP. This implies that the result.
$$
- \sum_{p \in U} \mu_k^z(p) \log (\mu_k^z(p)) \rightarrow_{k\rightarrow \infty} 0 
$$
\end{proof}

 We need to compute the second term $H'(\mu_k^z)$. Let's split the sum again in four parts. We'll use the notation $ P_k^U := \Big( kP \cap \mathbb{Z}^m \Big) / k - U $.
  
\begin{align}
H'(\mu^z_k) :&= \underset{\alpha \in P_k^U}\sum \mu_k^z(\alpha) \log((2\pi k)^{m/2})  \\
&- \underset{\alpha \in P_k^U}\sum \mu_k^z(\alpha) \log(\sqrt{\det \nabla^2 u_0(\alpha)})  \\
&+ \underset{\alpha \in P_k^U}\sum \mu_k^z(\alpha) k I^z(\alpha) \\
&- \underset{\alpha \in P_k^U}\sum \mu_k^z(\alpha) \log\bigg(1+\frac{c_\alpha} {k^{1/2 - \epsilon}} \bigg)
\end{align}

Then trivially we have that $ \lvert(4)\lvert < \log \bigg( 1 + \frac{M} {k^{1/2 - \epsilon}} \bigg) \underset{k\rightarrow \infty}\rightarrow 0$ and that $ (1) =  \frac{m}{2}\log(2\pi k) + \text{o}(1) $.

For $(2)$ we need to notice that $u_0$ is strictly convex on the interior of $P$, so it's stricly concave on $P-U$. $\log(\lvert \det \nabla^2 u_0(\alpha) \rvert)^{1/2}$ is a consequently a continuous bounded function on $P-U$ and the LDP implies that the term converges to $\log(\lvert \det \nabla^2 u_0(\mu(z)) \rvert^{1/2})$.

The only difficult term to compute is the third, which we'll denote $A_3$.
 
 \begin{lem}
 $\lim_{k \to \infty} A_3 = \frac{m}{2}$
 \end{lem}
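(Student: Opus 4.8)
The plan is to analyze the term
\[
A_3 = \sum_{\alpha \in P_k^U} \mu_k^z(\alpha)\, k\, I^z\!\left(\tfrac{\alpha}{k}\right)
\]
by recognizing it as (essentially) a Bernstein-type sum for the function $I^z$, which vanishes to second order at $x_0 = \mu_h(z)$. First I would record that $I^z(x_0) = 0$ and $\nabla I^z(x_0) = 0$, and that $\nabla^2 I^z = \nabla^2 u_0$ everywhere, so near $x_0$ we have $I^z(x) = \tfrac12 \nabla^2 u_0|_{x_0}(x-x_0, x-x_0) + O(|x-x_0|^3)$. The factor of $k$ in front then means that only a $k^{-1/2}$-neighborhood of $x_0$ contributes: on that scale $k\, I^z(\alpha/k)$ is $O(1)$, while outside it the large deviations estimate (Theorem \ref{LDINTRO}), or equivalently the Gaussian decay of $\pcal_{h^k}$ from Lemma \ref{pcalLem}, makes $\mu_k^z(\alpha)$ exponentially small and kills the contribution even after multiplication by $k$ and by the polynomially-many lattice points. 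So $A_3 = \sum_{|\alpha/k - x_0| \le k^{-1/2+\epsilon}} \mu_k^z(\alpha)\, k\, I^z(\alpha/k) + o(1)$.

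On that shrinking neighborhood I would substitute the uniform local asymptotics of $\mu_k^z(\alpha)$ derived just above (from Lemma \ref{pcalLem} together with $\Pi_{h^k}(z,z) = k^m + O(k^{m-1})$): with $y = \alpha/k$,
\[
\mu_k^z(\alpha) = (2\pi k)^{-m/2} \sqrt{\det \nabla^2 u_0(x_0)}\; e^{-k I^z(y)}\,(1 + O(k^{-1/2+\epsilon})),
\]
and the Taylor expansion $k\, I^z(y) = \tfrac{k}{2}\nabla^2 u_0|_{x_0}(y-x_0,y-x_0) + O(k\,|y-x_0|^3)$. Changing variables to $w = \sqrt{k}\,(y - x_0)$, the lattice sum becomes a Riemann sum (with mesh $k^{-1/2}$, hence the $k^{-m/2}$ prefactor is exactly the volume element) approximating the Gaussian integral
\[
\int_{\R^m} \frac{\sqrt{\det \nabla^2 u_0(x_0)}}{(2\pi)^{m/2}}\; \tfrac12 \nabla^2 u_0|_{x_0}(w,w)\; e^{-\frac12 \nabla^2 u_0|_{x_0}(w,w)}\, dw.
\]
This is a standard Gaussian moment computation: for a centered Gaussian on $\R^m$ with covariance $\Sigma = (\nabla^2 u_0|_{x_0})^{-1}$, one has $\E[\tfrac12 \langle \Sigma^{-1} w, w\rangle] = \tfrac12 \operatorname{Tr}(\Sigma^{-1}\Sigma) = \tfrac{m}{2}$. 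Hence $A_3 \to \tfrac{m}{2}$.

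The main obstacle is making the passage from the lattice sum to the Gaussian integral fully rigorous and uniform: one must control (i) the cubic remainder $O(k\,|y-x_0|^3) = O(k^{-1/2+3\epsilon})$ in the exponent on the relevant window, which forces the choice $\epsilon$ small; (ii) the replacement of the Riemann sum over $\tfrac1k \Z^m \cap B(x_0, k^{-1/2+\epsilon})$ by the integral, using that the integrand $\tfrac12\langle \Sigma^{-1}w,w\rangle e^{-\frac12\langle \Sigma^{-1}w,w\rangle}$ is Lipschitz with rapidly decaying tails so the Euler–Maclaurin / mesh error is $o(1)$; and (iii) absorbing the relative error $O(k^{-1/2+\epsilon})$ in the amplitude, which contributes $O(k^{-1/2+\epsilon}) \cdot \sum \mu_k^z(\alpha) k I^z(\alpha/k) = O(k^{-1/2+\epsilon}) \cdot O(1) = o(1)$. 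The tail estimate outside $B(x_0,k^{-1/2+\epsilon})$ — bounding $k \sum_{\text{tail}} \mu_k^z(\alpha) I^z(\alpha/k)$ — follows from the exponential smallness in the LDP, since $k \cdot (\text{poly in }k) \cdot e^{-c k^{2\epsilon}} \to 0$; this should be stated carefully but is not deep. Everything is uniform on $P - U$ because, as in the preceding lemma, $\nabla^2 u_0$ and its derivatives are bounded there.
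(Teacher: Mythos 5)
Your proposal is correct and follows essentially the same route as the paper's proof: localize to a shrinking window around $x_0=\mu(z)$ using the exponential decay from the LDP, Taylor-expand $I^z$ to second order with Hessian $\nabla^2 u_0$, rescale by $\sqrt{k}$ to turn the lattice sum into a Riemann sum for the Gaussian second-moment integral, and evaluate that moment as $\frac12\operatorname{Tr}(\Sigma^{-1}\Sigma)=\frac m2$. Your bookkeeping of the window width versus the cubic remainder matches the paper's choice $\delta_k=\epsilon k^{-\gamma}$, $\gamma\in(\frac13,\frac12)$, and your explicit trace computation in fact cleans up the normalization of the paper's final displayed limit.
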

 
 \begin{proof}
Let $K:= -\nabla^2u_0(\mu(z))$. For a $ \delta $ arbitrary small, we have : 
\begin{itemize}
\item Outside of $ B_\delta(x_0)$ we have $ I^z > \epsilon_1(\delta) \delta^2 $ with $\epsilon_1(\cdot)$ a strictly positive function with a strictly positive lower bound
\item Inside of $ B_\delta(x_0)$ we have $ f_k(\alpha) = \lvert \det \nabla^2 u_0(\alpha) \rvert^{1/2} \Big(1+\frac{c_\alpha^k} {k^{1/2 - \epsilon}} \Big)  = \lvert \det K \rvert^{1/2} (1 + \epsilon_2^k(\alpha-x_0))$ such that $ \lVert \epsilon_2^k (\cdot) \rVert_\infty < \epsilon_{2,\delta}^k $, with $\epsilon_{2,\delta}^k$ increasing with $\delta$ and decreasing with $k$ such that it vanishes as $k\rightarrow \infty$ and $\delta \rightarrow 0$.
\item Inside of $ B_\delta(x_0)$ we also have that $ I^z(x_0+\delta x) = \frac{1}{2}K(\delta x, \delta x) + \epsilon_3(\delta x)\lVert \delta x \rVert^3$ with $ \lVert \epsilon_3 (\cdot) \rVert_\infty < \infty $
\end{itemize}

We treat the $\epsilon_1$ as an increasing positives functions of $\delta$ that vanish in 0 and $\epsilon_2^k$ as an increasing with $\delta$ and decreasing with $k$ positive function that vanish in 0 as $k\rightarrow \infty$.
We then have :

\begin{align}
\frac{A_3 }{\sqrt{\det K}(2\pi)^{-m/2}}   &=  \frac{(2\pi)^{-m/2}}{\sqrt{\det K}}\underset{\alpha \in P_k^U}\sum \mu_k^z(\alpha) k I^z(\alpha) \\
&= \underset{\alpha \in P_k^U \cap B_\delta(x_0)}\sum k^{-m/2} \sqrt{\frac{\lvert \det \nabla^2 u_0(\alpha) \rvert}{\det K}} \bigg(1+\frac{c_\alpha} {k^{1/2 - \epsilon}} \bigg) e^{-k I^z(\alpha)} k I^z(\alpha) + O(k^{1+m/2} e^{-k \epsilon_1(\delta) \delta^2} ) \\
&= k^{-m/2} \underset{\delta x \in (P_k^U-x_0) \cap B_\delta(0)}\sum e^{-\frac{k}{2} K(\delta x,\delta x)} \frac{k}{2} K(\delta x,\delta x) ( 1 + O(\epsilon_2^k) + O(k \delta^3))  +  O(k^{1+m/2} e^{-k \epsilon_1(\delta) \delta^2} ) \\
&\text{ with the two $O$ being uniformly bounded over $(P_k^U-x_0) \cap B_{\delta_0}(0)$. Now let's scale up the sum.} \\
&= k^{-m/2} \underset{\delta x \in \sqrt{k}(P_k^U-x_0) \cap B_{\sqrt{k}\delta}(0)}\sum e^{-\frac{1}{2} K(\delta x,\delta x)} \frac{1}{2} K(\delta x,\delta x) ( 1 + O(\epsilon_2^k) + O(k  \delta^3))  +  O(k^{1+m/2} e^{-k \epsilon_1(\delta) \delta^2} )
\end{align}

Now the set $ P_\delta^k := \sqrt{k}(P_k^U-x_0) \cap B_{\sqrt{k}\delta}(0)$ is for small enough $\delta$ and interior $x_0$ the set $\frac{1}{\sqrt{k}}(\mathbb{Z}^m- x_0) \cap B_{\sqrt{k}\delta}$. If we choose a specific sequence of $\delta_k = \frac{\epsilon}{k^\gamma}$ with $\gamma \in (\frac{1}{3},\frac{1}{2})$, the series converges as $k$ goes to infinity and all the $O$s vanish from the limit.

The series is a truncated step function approximation of the following integral
\[
\int_{\mathbb{R}^m} \frac{K(x,x)}{2}e^{-\frac{K(x,x)}{2}} dx = \sqrt{\det{\frac{2\pi}{K}}} \frac{m}{2}
\]
Which is Riemann integrable so we don't need further arguments.
Finally,
\[ \lim_{k \to \infty} A_3 = \frac{m}{2 a_0 (2\pi)^m}\]

\end{proof}

By binding all the pieces together, we obtain the following asymptotics for the entropy of the measures $\mu_k^z$ :

\[
H(\mu_k^z) \underset{k\rightarrow \infty}{=} \frac{m}{2}\log(2 \pi k) - \frac{1}{2}\log(\lvert \det \nabla^2 u_0(x_0) \rvert) + \frac{m}{2} + o(1)
\]

This concludes the proof of Theorem \ref{H}.

\subsection{Point $z$ for which  $\mu_k^z$ has asymptotically maximal entropy: Proof of Theorem \ref{MAX}}
We now consider the point $x  = \mu(z)$ for which the measure $\mu_k^z$ has asymptotically  maximal entropy within the family $\{\mu_k^z\}.$ 
For Fano \kahler manifolds, we prove that there exists a unique point $x = \mu(z)$ at which $\mu_k^z$ has asymptotically maximal entropy. For
Fano \kahler manifolds with \kahler-Einstein metric, we prove that the unique point $x \in P$ is the point $x =0$.

\begin{proof}
As mentioned in the introduction, it follows from 
Theorem \ref{H} that the points $z$ such that $\mu_k^z$ has asymptotically maximal entropy are points where the Ricci potential is maximal. 
Due to the inverse relation of $ \det i \ddbar \phi$ and $L(x) = \log(\lvert \det \nabla^2 u_0(x_0) \rvert)$ 
\eqref{LDEF}, points where the Ricci potential is maximal  lie in the inverse image under the moment map $\mu_h$
of points $x$  for which $L(x) $  is minimal (see Theorem \ref{H2TH}). Since $u$ is convex,  we may remove the absolute value.

It is proved in \cite[Theorem 2.8]{Ab98} that  $L(x)$  is a smooth function on the interior of $P$ and $L(x) \uparrow \infty$ as $
x \to \partial P$. Consequently, $u$ has a global minimum  which lies in the interior of $P$.  This proves existence for all toric \kahler manifolds.

If we assume that  Ricci curvature $\rm{Ric}(\omega) $ is positive definite, then the  minimum
is unique. Indeed, by  \eqref{RICCI}, positive Ricci curvature   is equivalent to $L(x)$
 \eqref{LDEF} being a strictly convex function.  Since $L(x) \uparrow \infty$ as $
x \to \partial P$, $L(x)$ is proper and strict convexity implies that the minimum of $u$ is unique. We state the result as the following, 
\begin{lem} \label{RICPOTLEM} If $(M, \omega)$ is a toric Fano manifold of positive Ricci curvature, then there is a unique point $\rho_0$ in the open orbit of maximal
entropy, corresponding to a unique point $x_0 = \mu_h(e^{\rho_0/2}) \in P. $ \end{lem}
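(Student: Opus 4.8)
The plan is to reduce the statement to a convexity property of the function $L$ from \eqref{LDEF} on the open polytope $P^o$, and then to quote Abreu's theory of symplectic potentials for that property. First, by Theorem~\ref{H2TH}, whose estimates are uniform for $\mu_h(z)$ in compact subsets of $P^o$,
\[
H(\mu_k^z) = \frac{m}{2}\log(2\pi e k) - L(\mu_h(z)) + o(1) \qquad (k\to\infty),
\]
uniformly on such compacta; moreover, by the techniques of this article the same asymptotics remain valid up to $\partial P$ (cf.\ the Remark following the main definition), where $L(x)\uparrow\infty$ by \cite[Theorem 2.8]{Ab98}. Hence for large $k$ the entropy $H(\mu_k^z)$ is strictly smaller for $\mu_h(z)$ near $\partial P$ than for $\mu_h(z)$ in the interior, so the point(s) of asymptotically maximal entropy lie over the interior of $P$ and are precisely the $z$ with $\mu_h(z)$ a global minimizer of $L$ on $P^o$. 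Everything now reduces to the minimization of $L$.

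Second, existence of a minimizer: again by \cite[Theorem 2.8]{Ab98}, $L$ is smooth and proper on $P^o$ (it blows up at $\partial P$), so a minimizing sequence is confined to a compact subset of $P^o$ and $L$ attains its infimum at some $x_0\in P^o$. This already furnishes a point of asymptotically maximal entropy for an arbitrary toric \kahler manifold, which is the existence part of the surrounding Theorem~\ref{MAX}.

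Third, uniqueness under $\Ric(\omega)>0$: by Abreu's formula \eqref{RICCI}, the condition $\Ric(\omega)>0$ is equivalent to $L$ being a strictly convex function on $P^o$, and a proper strictly convex function on the convex open set $P^o$ has exactly one critical point, which is its unique global minimum, namely $x_0$. I expect the only step here that is not purely formal to be that equivalence: transporting positivity of the $(1,1)$-form $\Ric(\omega)$ on the open orbit, through the Legendre duality between $\varphi$ and $u$ and the action--angle description of the metric, into positive-definiteness of the Euclidean Hessian $\nabla^2 L$. This is precisely the computation carried out in \cite{Ab98}, so in practice this step becomes a citation rather than a fresh calculation, and there is no genuinely new obstacle beyond this dictionary.

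Finally, translate back to $M$: set $\rho_0 := 2\log\mu_h^{-1}(x_0)$, the unique point of $\R^m$ with $\mu_h(e^{\rho_0/2})=x_0$. Since the measures $\mu_k^z$ depend on $z$ only through $\mu_h(z)$ and are $\T$-invariant, $\mu_k^z$ has asymptotically maximal entropy exactly on the torus orbit $\{\,e^{\rho_0/2+i\theta}:\theta\in\T\,\}$, that is, at the unique interior point $\rho_0$ up to the real torus action, with moment-map image $x_0\in P$. This is the assertion of the lemma.
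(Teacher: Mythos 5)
Your proof is correct and takes essentially the same route as the paper's: reduce the question to minimizing $L$ on $P^o$ via Theorem \ref{H2TH}, obtain existence from the smoothness of $L$ and its blow-up at $\partial P$ (citing \cite[Theorem 2.8]{Ab98}), and obtain uniqueness from the equivalence of $\Ric(\omega)>0$ with strict convexity of $L$ via \eqref{RICCI}, then pull back through the moment map. Your extra care about uniformity of the $o(1)$ term and the behavior near $\partial P$ is a refinement the paper leaves implicit, not a different argument.
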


We now assume further that $\omega$ is a toric \kahler-Einstein metric. In that case,  the
equation $\rm{Ric}(\omega) = a  \omega$ (for some $C > 0$) implies that  there exists a constant vector $\vec b$ and $c \in \R$ so that  \begin{equation}
\label{KEP} -\log \det (i \ddbar \varphi)(\rho) = a \varphi(\rho) + \vec b \cdot \rho + c. \end{equation} Indeed,
$\ddbar (-\log \det (i \ddbar \varphi)(\rho) - a \varphi(\rho)) =0$ and therefore the difference potential  is a toric pluriharmonic function, hence a linear function.
By Lemma \ref{RICPOTLEM}, we get
\begin{lem} \label{KERICPOTLEM} If  If $(M, \omega)$ is a toric \kahler-Einstein  Fano manifold of positive Ricci curvature, then in the gauge \eqref{KEP},
there is a unique critical point in the open orbit where \begin{equation} \label{CRIT} \nabla \varphi(\rho) = \mu_h(e^{\rho}) =  - \frac{1}{a} \vec b. \end{equation}
\end{lem}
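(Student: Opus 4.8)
The plan is to obtain \eqref{CRIT} by differentiating the \kahler--Einstein identity \eqref{KEP}, and to read off uniqueness from strict convexity. By Theorem \ref{H2TH} the asymptotic entropy of $\mu_k^z$ equals $\frac{m}{2}\log(2\pi e k) - L(x)$ with $x = \mu_h(z)$, so the points of asymptotically maximal entropy are exactly the minimizers of $L(x) = \half\log\det\nabla^2 u_0(x) = -\half\log\det(i\ddbar\varphi)$. Using the Legendre duality of Hessians, $\nabla^2 u_0(x) = (\nabla^2\varphi(\rho))^{-1}$ when $x = \nabla\varphi(\rho)$, and the fact that the moment map $\nabla\varphi$ is a diffeomorphism $\R^m \to P^o$, minimizing $L$ over $x \in P^o$ is the same as minimizing the function $\rho \mapsto -\half\log\det(i\ddbar\varphi)(\rho)$ on $\R^m$. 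Lemma \ref{RICPOTLEM} already guarantees, in the Fano case with $\Ric(\omega) > 0$, that this minimum exists, is unique, and is attained at a point $\rho_0$ of the open orbit, equivalently at an interior point $x_0 = \mu_h(e^{\rho_0/2}) \in P^o$.

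Next I would substitute \eqref{KEP}. In the gauge in which $-\log\det(i\ddbar\varphi)(\rho) = a\varphi(\rho) + \vec b\cdot\rho + c$, minimizing $-\half\log\det(i\ddbar\varphi)$ over $\rho$ amounts to minimizing $\half\bigl(a\varphi(\rho) + \vec b\cdot\rho + c\bigr)$. Its $\rho$-gradient is $\half\bigl(a\,\nabla\varphi(\rho) + \vec b\bigr)$, which vanishes precisely when $\nabla\varphi(\rho) = -\frac{1}{a}\vec b$; since $\nabla\varphi(\rho) = \mu_h(e^\rho)$ this is exactly \eqref{CRIT}. For uniqueness: $\varphi$ is a \kahler potential, so its real Hessian $\nabla^2\varphi$ is positive definite, and $a > 0$; hence $\rho \mapsto a\varphi(\rho) + \vec b\cdot\rho + c$ is strictly convex and has at most one critical point. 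This reproves the uniqueness already contained in Lemma \ref{RICPOTLEM} and pins down the critical point explicitly.

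There is essentially no obstacle here beyond bookkeeping; the one point that needs a word is that the critical point must lie in the open orbit rather than escaping to $\partial P$. That is exactly the content of Lemma \ref{RICPOTLEM}, whose proof uses \eqref{RICCI} and \cite[Theorem 2.8]{Ab98} to the effect that $L(x) \uparrow \infty$ as $x \to \partial P$, so that the proper strictly convex function $L$ attains its minimum in $P^o$. The only mild subtlety is gauge-dependence: \eqref{KEP} holds as written only after $\varphi$ has been chosen so that the $\T$-invariant pluriharmonic — hence affine — function $-\log\det(i\ddbar\varphi) - a\varphi$ is literally $\vec b\cdot\rho + c$, and the vector $\vec b$ occurring in \eqref{CRIT} is this same $\vec b$. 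When the Futaki invariant vanishes one has $\vec b = 0$ in this gauge, so \eqref{CRIT} becomes $\mu_h(e^\rho) = 0$, which is the input needed to identify the maximal-entropy point with the centre of mass of $P$ in the centred form of \cite{M87}, as asserted in Theorem \ref{MAX}.
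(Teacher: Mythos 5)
Your proposal is correct and follows essentially the same route as the paper: substitute the \kahler--Einstein identity \eqref{KEP} into the Ricci potential, differentiate in $\rho$ to obtain the critical point equation $a\nabla\varphi(\rho)+\vec b=0$, i.e.\ \eqref{CRIT}, and invoke Lemma \ref{RICPOTLEM} for existence and uniqueness of the minimizer in the open orbit. Your added remark that $a\varphi(\rho)+\vec b\cdot\rho+c$ is itself strictly convex (so uniqueness can also be read off directly) is a harmless elaboration of what the paper leaves implicit.
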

Alternatively, we may write  the critical point equation in terms of   the symplectic potential $u$ and the function $L(x)$ \eqref{LDEF}. 
The \kahler-Einstein equation \eqref{KEP} for the potentials then changes to,
\begin{equation} \label{KESP} L(x) = a \phi(e^{\rho_x}) + \vec b \cdot \rho_x + c, \;\; \mu_h(\rho_x) = x. \end{equation} Since $u = \lcal \phi$ (Legendre transform),  and $\nabla u(x) = \mu_h^{-1} (x)$ one has
 $u(x) = x \cdot \rho_x - \phi(e^{\rho_x})$ and so \eqref{KESP} simplifies to,
 \begin{equation} \label{KESP2} L(x) = a (x \cdot \nabla u(x)  - u(x)) + \vec b \cdot \nabla u(x)  + c = (a x + \vec b) \cdot \nabla u - a u(x)  + c.\end{equation} 
 
 As mentioned in Section \ref{BACKGROUNDOO}, toric \kahler potentials are not unique because of the gauge symmetries. For instance, one may add a linear function of $\rho$ to obtain an equivalent potential. 
The shift of gauge by an affine function results in a translation of the Delzant polytope. 
According to \cite[Definition 3.6]{M87}, there exists an  open orbit toric \kahler potential $\phi$ so that 
 so that \begin{equation} \label{MAB} \det D^2_{\rho} \varphi = e^{- \varphi}, \;\; e^{-\varphi} \prod_{j=1}^m \frac{i dz_j \wedge d \bar{z}_j }{|z_j|^2}\in C^{\infty}(M, \Omega), \end{equation} where $z_j$ are open orbit coordinates
  (in  $(\C^*)^m$ and  $C^{\infty}(M, \Omega) $ is the space of smooth volume forms. We refer to \eqref{MAB} as the Mabuchi or \kahler-Einstein gauge. In the gauge \eqref{MAB}, the the potential satisfies,
   \begin{equation} \label{GOODGAUGE} \varphi = - \log \det (i \ddbar \varphi)(\rho) \; (\vec b = 0). \end{equation}

Combining with Lemma \ref{KERICPOTLEM} gives, \begin{lem} \label{GAUGE}  In the  gauge \eqref{GOODGAUGE}   for the  \kahler potential,   the unique point of maximal entropy solves the critical point equation,
  $$\nabla_{\rho}  \varphi(\rho) = 0, \;\; (\iff \mu(e^{\rho/2}) = 0). $$\end{lem}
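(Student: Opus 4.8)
The plan is to read off Lemma~\ref{GAUGE} from Lemma~\ref{KERICPOTLEM} together with the defining property of the Mabuchi gauge: the point is simply that the normalization \eqref{GOODGAUGE} is the special case of the \kahler-Einstein identity \eqref{KEP} in which the affine term $\vec b\cdot\rho+c$ has been absorbed into $a\varphi$, so that the translation vector $-\tfrac1a\vec b$ occurring in the critical point equation \eqref{CRIT} becomes $0$.

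First I would reassemble the pieces already in hand. By Theorem~\ref{H2TH} the entropy is $H(\mu_k^z)=\tfrac m2\log(2\pi e k)-L(\mu_h(z))+o(1)$, so the points of asymptotically maximal entropy are precisely the minimizers of $L$ on $P^o$; and by Lemma~\ref{RICPOTLEM}, in the Fano positively curved case $L$ is strictly convex and proper, so the minimizer $x_0$ is unique and interior. Transporting by the moment-map diffeomorphism $\rho\mapsto\mu_h(e^{\rho/2})$, the corresponding $\rho_0$ is the unique minimizer of $\tilde L(\rho):=L(\mu_h(e^{\rho/2}))=-\tfrac12\log\det(i\ddbar\varphi)(\rho)$, using \eqref{LDEF}. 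Inserting the \kahler-Einstein identity \eqref{KEP} gives $\tilde L(\rho)=\tfrac12\bigl(a\varphi(\rho)+\vec b\cdot\rho+c\bigr)$; differentiating in $\rho$ and using $\mu_h(e^{\rho/2})=\nabla_\rho\varphi(\rho)$ yields $\nabla_\rho\tilde L(\rho)=\tfrac12\bigl(a\,\mu_h(e^{\rho/2})+\vec b\bigr)$, so the critical point equation is $\mu_h(e^{\rho_0/2})=-\tfrac1a\vec b$, which is exactly \eqref{CRIT}.

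It then remains to see what the gauge \eqref{GOODGAUGE} does to the right-hand side. Mabuchi's condition \eqref{MAB}, $\det D^2_\rho\varphi=e^{-\varphi}$, reads $-\log\det(i\ddbar\varphi)(\rho)=\varphi(\rho)$, i.e.\ \eqref{KEP} with $a=1$, $\vec b=0$, $c=0$; equivalently, starting from any potential obeying \eqref{KEP} one reaches \eqref{GOODGAUGE} by the affine change $\varphi\mapsto\varphi+\tfrac1a(\vec b\cdot\rho+c)$, which leaves $D^2_\rho\varphi$ (hence $\det i\ddbar\varphi$) unchanged but kills the affine term, and such a potential exists by \cite[Definition 3.6]{M87} (see also \cite{WZ04}). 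In this gauge the critical point equation of the preceding paragraph becomes $\nabla_\rho\varphi(\rho_0)=0$, i.e.\ $\mu_h(e^{\rho_0/2})=0$; in particular $0$ lies in the interior of the translated polytope, which is consistent with its being the preferred center, and in the \kahler-Einstein case the center of mass of $P$, as in Theorem~\ref{MAX}. This is the assertion of Lemma~\ref{GAUGE}.

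The whole argument is essentially normalization bookkeeping, so I do not expect a genuine obstacle; the one delicate point is exactly that bookkeeping --- verifying that the affine part of \eqref{KEP} is entirely absorbed by a gauge shift, that the Mabuchi gauge \eqref{MAB} realizes $a=1$, $\vec b=0$, $c=0$, and that the critical value $-\tfrac1a\vec b$ is actually attained (already guaranteed by the properness and uniqueness in Lemma~\ref{RICPOTLEM}). Once this is settled, the equivalence $\nabla_\rho\varphi(\rho)=0\iff\mu_h(e^{\rho/2})=0$ is immediate from $\mu_h=\nabla_\rho\varphi$.
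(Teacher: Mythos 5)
Your proposal is correct and follows essentially the same route as the paper: it combines the critical point equation \eqref{CRIT} of Lemma \ref{KERICPOTLEM} (derived from the \kahler-Einstein identity \eqref{KEP} and $\mu_h = \nabla_\rho\varphi$) with the observation that the Mabuchi normalization \eqref{MAB} realizes \eqref{KEP} with $\vec b = 0$, so the critical point equation reduces to $\nabla_\rho\varphi(\rho)=0$. Your version merely spells out the differentiation and the absorbing of the affine term by a gauge shift more explicitly than the paper does.
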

  
  It follows that in this gauge, the point $x_0 \in P$ for which $\mu_k^{z_0}$ has asymptotically maximal entropy is the origin $0 \in P$. 
 
 This completes the proof of Theorem \ref{MAX}.
\end{proof}

\begin{rem} According to \cite{M87}, 
  the origin  in the gauge \eqref{MAB}  is the center of mass of the polytope $P$. This follows from the facts that, 
  by  \cite[Corollary 5.5]{M87},  the so-called Futaki invariant ${\bf a}_P = 0$, and by \cite[Theorem 9.2.3]{M87}, that the center
 of mass of $P$ is $0$. 

   One may directly prove that  the
center of mass in the \kahler-Einstein gauge equals $0$,  i.e. the gauge $\varphi$
such that $- \log \det i \ddbar \varphi = a \varphi$, using the moment
map change of variables $\mu_h(e^\rho) = \nabla_\rho \varphi (\rho) =
x(\rho)$, which gives $$
x_{\text{mass}} = \frac{1}{\Vol(P)} \int_P x dx = \frac{1}{\Vol(P)}
\int_{\R^n} \nabla_\rho \varphi (\rho) \det { \nabla_\rho^2 \varphi } \
d\rho. $$
Using the fact that $\det i\ddbar \varphi (e^\rho) = \det \nabla_\rho^2
\varphi d\rho = e^{- \varphi(\rho)} d\rho $ we get that \begin{equation} \label{CMASS} 
x_{\text{mass}} = -\frac{1}{\Vol(P)} \int_{\R^n} \nabla_\rho (e^{-
\varphi }) d\rho \end{equation}
By \eqref{OOKP},
 $ \varphi (\rho) = \max_{\alpha \in P \cap \Z^m}\;  \rho \cdot \alpha + O(1)
   $ as $\rho \to \infty$. As long as  $ \max_{\alpha \in P \cap \Z^m}\;  (\rho \cdot \alpha)
 \geq \epsilon |\rho| $ for some $\epsilon >0$ and large $\rho$, $e^{- \varphi}$ is
rapidly decreasing and one can integrate by parts  in \eqref{CMASS} to prove
that $x_{\text{mass}}=0$. The lower bound is true if and only if $0 \in P^o$. 
It must be the case that $\max_{\alpha \in P \cap \Z^m}\;  \rho \cdot \alpha \geq \epsilon |\rho|$ for large $\rho$; otherwise,
 there exists
direction in which $\varphi$ does not go to $+\infty$ implying that $$
\Vol(P) = \int_P dx = \int_{\R^n} e^{- \varphi} d \rho = \infty $$
which is a contradiction thus proving the result.
  \end{rem}

  Some further references for the existence of the potential satisfying \eqref{MAB} are \cite{WZ04, W15}. 
 The formula \eqref{KESP2}  agrees with  \cite[(2.18)]{WZ04}.  They define the parameters
 $c_{\ell}$  by,
 $$\int_P y_{\ell} \exp \{\sum_{\ell =1}^n c_{\ell} y_{\ell} \} dy = 0.$$ 
if $M$ admits a \kahler-Einstein metric then $\vec c = 0$; in terms of our notation, $\vec c$ is the center of mass of $P$. See also \cite[Page 3615]{W15}
and  \cite[Section 3.3]{D08}. 

Let us check the identities in the simplest case of $\CP^1$ with Fubini-Study metric. If we  choose the gauge $u_{FS}(x) =x \log x + (1-x) \log (1-x)$  in which $P = [0,1]$,  
 then  $\log (u_{FS}''(x))^{-1} =  \log x (1-x)$, 
$b=1, a = -2.$
Similar equations hold for $\CP^m$. If we choose a gauge for which $u_{r}(x) = \half \left((r^2 + x) \log (r^2+ x) + (r^2 - x) \log (r^2 - x)\right)$ 
and  $P = [-r^2,r^2]$, then $b = 0$.

     \section{Convolution powers and toric measure: Proof of Theorem \ref{INVCONV}}
     
     \subsection{Proof of Proposition \ref{EQUIV}} 
     
     First, we prove Proposition \ref{EQUIV}.  For the reader's convenience we  recall that it says that,
     for any \kahler manifold $(M, \omega, J)$, the following are equivalent: \begin{enumerate}
\item $\rm{Hilb}_k(h)$ is {\it balanced} for all $k$, i.e. there exist constants $C_k$ so that the density of states  $\Pi_{h^k}(z) = C_k$ for all $k$.  \bigskip

\item  $\Pi_{h^k}(z,w)= A_k (\Pi_{h^1}(z,w))^k$, where 
$$ \frac{\dim H^0(M, L^k)}{\rm{vol} (M, \omega)}  = A_k  \left(\frac{\dim H^0(M, L)}{\rm{vol} (M, \omega)} \right)^k. $$

\end{enumerate}




\begin{proof}$(1) \implies (2)$.    If $\Pi_{h^k}(z,z) = C_k$ for some constant $C_k$, then necessarily $C_k =  \frac{\dim H^0(M, L^k)}{\rm{vol} (M, \omega)} $. If (1) holds, then the constant $C_k$ in (1)  is given by this formula for all $k$. 
We then define $A_k$ by   $ A_k = \frac{C_k}{ C_1^k}$. Thus, $C_k$ and $A_k$ are uniquely determined by the assumption (1) and by definition of $A_k$
we have  $\Pi_{h^k}(z,z)= A_k (\Pi_{h^1}(z,z))^k$. This equation
holds along the  totally real submanifold $\{(z, \bar{z}): z \in M\} \subset M \times \bar{M}$, where we identify the complexification of
$M$ with $M\times \bar{M}$. Since $\Pi_{h^k}(z,w)$ is holomorphic, the equation implies $\Pi_{h^k}(z,w)= A_k (\Pi_{h^1}(z,w))^k$ for
all $(z, w) \in M\times \bar{M}$, i.e. $(1) \implies (2). $

$(2) \implies (1)$. Conversely assume (2).  To prove (1) it suffices to prove that $\Pi_{h^1}(z,z) = C_1$. If this is false, we introduce constants $\alpha < 1, \beta > 1$ and consider the sets
$M_- = \{z: \Pi_{h^1}(z,z) < \alpha  C_1\}$ and $M_+ =  \{z: \Pi_{h^1}(z,z) >  \beta C_1\}.$ If $\Pi_{h^1}(z,z) = C_1$ is false,   $M_{\pm}$  must be non-empty open sets for
some $\alpha < 1, \beta > 1$.  Assuming (2),
we have $\Pi_{h^k}(z,z) < A_k C_1^k \alpha^k $ in $M_-$ and  $\Pi_{h^k}(z,z) > A_k C_1^k  \beta^k$ in $M_+$. But $A_k C_1^k = C_k=  \frac{1}{\rm{Vol}(M)} \dim H^0(M, L^k),$ and  standard Bergman kernel asymptotics give  $\Pi_{h^k}(z, z)  \simeq \frac{1}{\rm{Vol}(M)} \dim H^0(M, L^k) + o ( \dim H^0(M, L^k) )$.
We then  get the contradiction that $\frac{1}{\rm{Vol}(M)} \dim H^0(M, L^k)  < \alpha^k \frac{1}{\rm{Vol}(M)} \dim H^0(M, L^k)$ in $M_-$ and
$ \frac{1}{\rm{Vol}(M)} \dim H^0(M, L^k)  > \beta^k \frac{1}{\rm{Vol}(M)} \dim H^0(M, L^k)$, concluding the proof.

\end{proof}

\begin{rem} 
 We note that $\Pi_{h^k}(z,w) $ is holomorphic in $z$ and anti-holomorphic in $w$. The density of states
$\rho_k= \Pi_{h^k}(z,z)$ is the metric contraction of the diagonal values of $\Pi_{h^k}(z,w) $ by the metric $e^{- k \phi(z)}$ and therefore
is not the restriction of a holomorphic function on the complexification $M \times \bar{M}$ to the anti-diagonal.  Hence, $\Pi_{h^k}(z,z) = C_k$
does not imply that $\Pi_{h^k}(z,w) = C_k$. But the equation $\Pi_{h^k}(z,z)= A_k (\Pi_{h^1}(z,z))^k$ is the restriction of a holomorphic equation
to the anti-diagonal and therefore extends to all of $M \times \bar{M}$.
\end{rem}

\subsection{Proof of  Theorem \ref{INVCONV}}

To prove Theorem \ref{INVCONV} we need to relate convolution powers of $\mu_k^z$ and the two conditions in Proposition \ref{EQUIV}.

Define the lattice path  `partition function': For $\alpha \in k P \cap \Z^m$,
\begin{equation} \label{PARTFUN} {\mathcal P}_k(\alpha)  := \sum_{(\beta_1,
\dots, \beta_k): \beta_j \in P, \beta_1 + \cdots + \beta_k =
\alpha }\;\; \prod_{j=1}^k \frac{1}{\qcal(\beta_j)}. \end{equation}

Then, we have

\begin{lem}\label{POWERS}  $$\Pi_{h^k}(z,z)
= A_k (\Pi_{h^1}(z,z))^k \iff \pcal_k   \qcal_k = A_k. $$ \end{lem}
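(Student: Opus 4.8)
The statement to be proved, Lemma \ref{POWERS}, asserts the equivalence
\[
\Pi_{h^k}(z,z) = A_k\,(\Pi_{h^1}(z,z))^k \iff \pcal_k(\alpha)\,\qcal_k(\alpha) = A_k \text{ for all } \alpha \in kP\cap\Z^m.
\]
The natural approach is to expand both sides of the left-hand identity as Fourier/monomial series on the open orbit and compare coefficients term by term. Concretely, in the toric frame one has $\Pi_{h^1}(z,z) = \sum_{\beta\in P\cap\Z^m} \frac{|z^\beta|^2 e^{-\varphi(z)}}{\qcal(\beta)}$ by \eqref{FNa} (together with the relation between $B_k$, $F_{h^k}$ and the density of states in Section \ref{SZEGOSECT}), and likewise $\Pi_{h^k}(z,z) = \sum_{\alpha\in kP\cap\Z^m}\frac{|z^\alpha|^2 e^{-k\varphi(z)}}{\qcal_k(\alpha)}$. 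Raising the $k=1$ series to the $k$th power and using the multinomial expansion, the coefficient of $|z^\alpha|^2 e^{-k\varphi(z)}$ in $(\Pi_{h^1}(z,z))^k$ is exactly
\[
\sum_{\substack{\beta_1,\dots,\beta_k\in P\cap\Z^m\\ \beta_1+\cdots+\beta_k=\alpha}} \prod_{j=1}^k \frac{1}{\qcal(\beta_j)} = {\mathcal P}_k(\alpha),
\]
which is precisely the lattice path partition function \eqref{PARTFUN} (with the lattice constraint $\beta_j\in P\cap\Z^m$ automatically in force since $\qcal(\beta_j)$ is only defined for lattice points).

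First I would write out this power-series comparison carefully: since the monomials $z^\alpha\bar z^\alpha$ are linearly independent functions on $(\C^*)^m$ (distinct characters of $\T$ times distinct radial weights), two such series agree identically in $z$ if and only if all their coefficients agree. Hence the left-hand identity $\Pi_{h^k}(z,z) = A_k (\Pi_{h^1}(z,z))^k$ holds for all $z$ if and only if $\frac{1}{\qcal_k(\alpha)} = A_k\,{\mathcal P}_k(\alpha)$ for every $\alpha\in kP\cap\Z^m$, i.e. $\qcal_k(\alpha)\,{\mathcal P}_k(\alpha) = 1/A_k$... but I need to match the stated normalization. The cleaner route is to identify $\pcal_k(\alpha)$ with ${\mathcal P}_k(\alpha)$ up to the relevant normalizing constant: by the definition of $\mu_k^z$ as a convolution-type object, $\pcal_k$ should be (a constant multiple of) the $k$-fold "convolution" of the single-step weights $1/\qcal(\cdot)$, which is exactly ${\mathcal P}_k$. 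So the second step is to unwind the definition of $\pcal_k$ appearing in the statement and confirm $\pcal_k(\alpha) = {\mathcal P}_k(\alpha)$ (this is where the earlier normalization conventions — the factor $(2\pi)^m\mathrm{Vol}(P)$ versus $\#(P\cap\Z^m)$, and the precise meaning of $\pcal_k$ as distinct from $\pcal_{h^k}$ of \eqref{PHK} — must be pinned down). Granting that, the coefficient comparison gives $\frac{1}{\qcal_k(\alpha)} = A_k\,\pcal_k(\alpha)$, i.e. $\pcal_k(\alpha)\,\qcal_k(\alpha) = 1/A_k$ — so either the lemma intends $A_k$ to already be the reciprocal of the constant in Proposition \ref{EQUIV}, or there is a bookkeeping identity $A_k \cdot (\text{that constant}) = 1$ that I would verify directly from the explicit formula for $A_k$ in Proposition \ref{EQUIV}.

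The forward and backward directions are then symmetric: given $\pcal_k\qcal_k = A_k$ (constant in $\alpha$), substituting back into the series for $\Pi_{h^k}(z,z)$ and recognizing the result as $A_k$ times the $k$th power of the $k=1$ series recovers the operator identity; conversely the coefficient-matching argument above is reversible. I would also note that, as in the proof of Proposition \ref{EQUIV} and the Remark following it, the diagonal identity $\Pi_{h^k}(z,z) = A_k(\Pi_{h^1}(z,z))^k$ — being the restriction of a holomorphic identity in $(z,w)$ to the anti-diagonal — automatically upgrades to the full off-diagonal statement $\Pi_{h^k}(z,w) = A_k(\Pi_{h^1}(z,w))^k$, so nothing is lost by working on the diagonal.

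\textbf{Main obstacle.} The real content is not the analysis — the monomial-series comparison is routine once set up — but the \emph{bookkeeping of normalizations}: reconciling the constant $A_k$ as it appears in Lemma \ref{POWERS} with the explicit $A_k$ of Proposition \ref{EQUIV}, keeping straight which of $\pcal_k$, $\pcal_{h^k}$, $\qcal_k$, $Q_{h^k}$ carry which powers of $k$, of $2\pi$, and of $\mathrm{Vol}(P)$ or $\#(P\cap\Z^m)$, and confirming that the combinatorial object ${\mathcal P}_k$ of \eqref{PARTFUN} is literally the $k$th convolution power of the single-step weights that enter $\mu_1^z$. Getting these constants exactly right — so that "$\pcal_k\qcal_k = A_k$" is the precise dual of "$\Pi_{h^k} = A_k(\Pi_{h^1})^k$" with the \emph{same} $A_k$ — is the step most prone to error, and the one I would treat most carefully.
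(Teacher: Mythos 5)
Your proposal matches the paper's own proof: the paper likewise expands $F_{h^k}(z,w)=\sum_{\alpha} z^{\alpha}\bar w^{\alpha}/Q_{h^k}(\alpha)$, identifies the coefficients of $F_{h^1}^k$ with the partition function ${\mathcal P}_k(\alpha)$ of \eqref{PARTFUN} via the multinomial expansion, and compares monomial coefficients on the diagonal. The reciprocal discrepancy you flag ($\pcal_k\qcal_k=1/A_k$ versus $A_k$) is a bookkeeping slip present in the paper itself --- compare the displayed chain in the proof of Lemma \ref{CONVPOWERS}, which arrives at $(\Pi_{h^1}(z,z))^k/\Pi_{h^k}(z,z)=\pcal_k(\alpha)\qcal_{h^k}(\alpha)$ --- and does not affect the substance of the equivalence.
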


\begin{proof}
Recall from \eqref{FNa} that 
\begin{equation}\label{FNb}F_{h^k}(z, w)=
\sum_{\alpha \in k P \cap \Z^m}  \frac{z^{\alpha}
\bar{w}^{\alpha}}{Q_{h^k}(\alpha)} \;,\end{equation}   where $Q_{h^k}(\alpha)$ is defined in \eqref{QFORM},
and that  $$\Pi_{h^k}(x, y ) =  F_{h^k}(z,w) e_L^k(z) \overline{e_L^k(w)}.$$
On the other hand, by definition of the partition function, we
also have
$$ F_{h^1}^k (x,y) = \sum_{\alpha \in k P  \cap \Z^m} {\mathcal
P}_k(\alpha)  z^{\alpha}
\bar{w}^{\alpha}
$$
If we contract the diagonal values of each kernel with the metric, the hypothesis of the Lemma gives,
$$F_{h^k}(z,z) = A_k (F_{h^1}(z,z))^k, $$
and comparing coefficients of the monomials completes the proof.

\end{proof}


Next, we evaluate the the Fourier transform $\fcal_{x \to \xi} (\mu_1)^{*k} $  of the  convolution powers of $\mu_1^z$. 

\begin{lem} We have,
$$ \fcal_{x \to \xi}   (\mu_1^z)^{*k}=   \frac{1}{(\Pi_{h^1}(z,z))^k}  \sum_{\alpha \in k P \cap  \Z^m} \pcal_k(\alpha) e^{i \langle \alpha, \xi \rangle}.
$$
\end{lem}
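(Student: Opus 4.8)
The plan is to reduce the statement to the elementary fact that the Fourier transform intertwines convolution with multiplication, and then to expand a $k$-th power and collect terms. First I would unwind the definitions: since $k=1$ the measure $\mu_1^z$ is supported on the finite set $P\cap\Z^m$, so by \eqref{MUKZDEF} and the $k=1$ case of \eqref{PHK},
\[
\mu_1^z=\frac{1}{\Pi_{h^1}(z,z)}\sum_{\beta\in P\cap\Z^m}\pcal_{h}(\beta,z)\,\delta_\beta,\qquad \pcal_h(\beta,z)=\frac{|z^\beta|^2e^{-\varphi(z)}}{Q_h(\beta)},
\]
and hence its Fourier transform is the finite trigonometric sum
\[
\fcal_{x\to\xi}\mu_1^z=\frac{1}{\Pi_{h^1}(z,z)}\sum_{\beta\in P\cap\Z^m}\pcal_h(\beta,z)\,e^{i\langle\beta,\xi\rangle}.
\]
Because $\mu_1^z$ is a lattice measure, each convolution power $(\mu_1^z)^{*k}$ is again a finitely supported lattice measure, with support in $kP\cap\Z^m$, and every sum below is finite, so there is no convergence or rearrangement issue. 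Applying $\fcal(\mu*\nu)=\fcal(\mu)\,\fcal(\nu)$ repeatedly gives
\[
\fcal_{x\to\xi}(\mu_1^z)^{*k}=\bigl(\fcal_{x\to\xi}\mu_1^z\bigr)^{k}=\frac{1}{(\Pi_{h^1}(z,z))^{k}}\Bigl(\sum_{\beta\in P\cap\Z^m}\pcal_h(\beta,z)\,e^{i\langle\beta,\xi\rangle}\Bigr)^{k}.
\]

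Next I would expand the $k$-th power by the distributive law and group the resulting exponentials $e^{i\langle\beta_1+\cdots+\beta_k,\xi\rangle}$ according to the value of $\alpha=\beta_1+\cdots+\beta_k$, which ranges over $kP\cap\Z^m$:
\[
\Bigl(\sum_{\beta}\pcal_h(\beta,z)\,e^{i\langle\beta,\xi\rangle}\Bigr)^{k}=\sum_{\alpha\in kP\cap\Z^m}\Bigl(\ \sum_{\substack{\beta_1,\dots,\beta_k\in P\cap\Z^m\\ \beta_1+\cdots+\beta_k=\alpha}}\ \prod_{j=1}^{k}\pcal_h(\beta_j,z)\Bigr)\,e^{i\langle\alpha,\xi\rangle}.
\]
The inner sum is exactly $\pcal_k(\alpha)$, the $k$-fold convolution of the weight sequence $\{\pcal_h(\beta,z)\}_\beta$, so dividing through by $(\Pi_{h^1}(z,z))^k$ and inserting the previous display yields the asserted identity. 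As a consistency check, $\sum_{\alpha}\pcal_k(\alpha)=\bigl(\sum_{\beta}\pcal_h(\beta,z)\bigr)^{k}=(\Pi_{h^1}(z,z))^{k}$, so that $\fcal_{x\to\xi}(\mu_1^z)^{*k}$ takes the value $1$ at $\xi=0$, as it must for a probability measure. One further observes, using $\prod_j\pcal_h(\beta_j,z)=|z^{\alpha}|^2e^{-k\varphi(z)}\prod_j Q_h(\beta_j)^{-1}$ whenever $\beta_1+\cdots+\beta_k=\alpha$, that the common monomial factor $|z^{\alpha}|^2e^{-k\varphi(z)}$ pulls out of $\pcal_k(\alpha)$, expressing it through the lattice-path partition function \eqref{PARTFUN} that enters Lemma \ref{POWERS}.

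I do not anticipate a genuine obstacle here: the argument is pure bookkeeping. The two points that deserve to be stated with some care are that $\mu_1^z$ is supported on the integer lattice, so that its convolution powers stay within the same combinatorial framework with support in $kP\cap\Z^m$, and that the convolution-to-product property of the Fourier transform may be applied term by term precisely because every series appearing is a finite sum over the lattice points of the compact polytope $P$.
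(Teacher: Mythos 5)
Your proof is correct and follows essentially the same route as the paper: apply $\fcal(\mu*\nu)=\fcal(\mu)\fcal(\nu)$, expand the $k$-th power, and collect terms by the value of $\beta_1+\cdots+\beta_k=\alpha$ to recover the partition function \eqref{PARTFUN}. Your closing observation that the common factor $|z^{\alpha}|^2e^{-k\varphi(z)}$ pulls out of the inner sum is a useful clarification of the notation $\pcal_k(\alpha)$, which the paper leaves implicit.
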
 
\begin{proof} By definition of the partition function \eqref{PARTFUN}, 
$$\begin{array}{lll} \fcal_{x \to \xi}   (\mu_1^z)^{*k}   = (\fcal \mu_1^z)^k(\xi)  & = & 
 \left( \sum_{\alpha \in P} \frac{
\pcal_{h^1}(\alpha, z)}{\Pi_{h^1}(z,z)} e^{i \langle \alpha, \xi \rangle} \right)^k\\&&\\
& = &   \frac{1}{(\Pi_{h^1}(z,z))^k}  \sum_{\alpha \in k P \cap  \Z^m} \pcal_k(\alpha) e^{i \langle \alpha, \xi \rangle}
\end{array}$$

\end{proof}

The following Lemma is the main step in the proof of  Theorem \ref{INVCONV}.

\begin{lem} \label{CONVPOWERS} $\mu_k^z = (\mu_1^z)^{*k}$ for all $k$ and all $z\in M^o$  if and only if   \begin{equation} \label{ID} \frac{ \Pi_{h^k} (z,z)}{  (\Pi_{h^1}(z,z))^k}  =  A_k=   \pcal_k(\alpha) \qcal_k(\alpha), \;\; \forall k, \alpha \in kP \cap \Z^m, \forall z \in M^o, \end{equation}
where  $A_k$ is the constant  determined by Proposition \ref{EQUIV}. \end{lem}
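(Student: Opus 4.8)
The plan is to compute the Fourier transform of $\mu_k^z$ directly and compare it with the Fourier transform of $(\mu_1^z)^{*k}$ computed in the previous Lemma. First I would write out, from the definition \eqref{MUKZDEF} together with \eqref{FNa}, the Fourier transform
\[
\fcal_{x \to \xi}\, \mu_k^z = \frac{1}{\Pi_{h^k}(z,z)} \sum_{\alpha \in kP \cap \Z^m} \pcal_{h^k}(\alpha,z)\, e^{i\langle \alpha/k, k\xi\rangle},
\]
being careful that the atoms of $\mu_k^z$ sit at $\alpha/k$, whereas the atoms of $(\mu_1^z)^{*k}$ sit at $\alpha = \beta_1 + \cdots + \beta_k$; so the comparison is of Fourier transforms after the obvious rescaling, and the matching of atoms is exactly what forces the supports and hence the monomial-by-monomial comparison below. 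Using $\pcal_{h^k}(\alpha,z) = |z^\alpha|^2 e^{-k\varphi(z)}/Q_{h^k}(\alpha)$, the coefficient of $e^{i\langle\alpha,\xi\rangle}$ (in suitable variables) in $\fcal \mu_k^z$ is $\frac{1}{\Pi_{h^k}(z,z)} \cdot \frac{|z^\alpha|^2 e^{-k\varphi(z)}}{\qcal_k(\alpha)}$, while by the preceding Lemma the corresponding coefficient in $\fcal (\mu_1^z)^{*k}$ is $\frac{1}{(\Pi_{h^1}(z,z))^k}\, \pcal_k(\alpha)\, |z^\alpha|^2 e^{-k\varphi(z)}$, where I have re-expanded $\pcal_k(\alpha)$ against the monomial $z^\alpha\bar z^\alpha$ using the partition-function identity and the standard-frame normalization (note $e_L^k$ contributes the $e^{-k\varphi(z)}$ factor).

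Next I would equate these two expressions. Since the monomials $\{z^\alpha\}_{\alpha \in kP\cap\Z^m}$ are linearly independent as functions on the open orbit $(\C^*)^m$, $\mu_k^z = (\mu_1^z)^{*k}$ for a given $z$ and $k$ holds if and only if
\[
\frac{1}{\Pi_{h^k}(z,z)}\cdot\frac{1}{\qcal_k(\alpha)} = \frac{\pcal_k(\alpha)}{(\Pi_{h^1}(z,z))^k} \qquad \text{for all } \alpha \in kP\cap\Z^m,
\]
that is, $\pcal_k(\alpha)\,\qcal_k(\alpha) = \dfrac{(\Pi_{h^1}(z,z))^k}{\Pi_{h^k}(z,z)}$. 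The left side is independent of $z$, so if the identity holds for all $z$ then the right side is a constant; and by Proposition \ref{EQUIV} (via Lemma \ref{POWERS}) that constant, if it exists, must be $A_k$ — indeed $\pcal_k\qcal_k = A_k$ is precisely the equivalent form in Lemma \ref{POWERS} of $\Pi_{h^k}(z,z) = A_k(\Pi_{h^1}(z,z))^k$. Conversely, if \eqref{ID} holds then the two coefficient formulas agree for every $\alpha$ and every $z \in M^o$, so the Fourier transforms agree and hence $\mu_k^z = (\mu_1^z)^{*k}$. Running this for all $k$ gives the stated equivalence.

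The main obstacle I expect is purely bookkeeping rather than conceptual: keeping the dilation by $1/k$ consistent between the atoms of $\mu_k^z$ (at $\alpha/k$) and those of the $k$-fold convolution (at integer points $\alpha$), and correctly tracking the frame factors $e_L^k(z)\overline{e_L^k(z)} = e^{-k\varphi(z)}$ so that they cancel cleanly on both sides, leaving an identity purely among the rational functions $1/\qcal_k$, $\pcal_k$ and the density of states. Once the Fourier transforms are written in a common set of variables with matching supports, the linear independence of monomials does all the work, and the identification of the constant with $A_k$ is immediate from Lemma \ref{POWERS} and Proposition \ref{EQUIV}. I would also remark that the "for all $z$" quantifier is essential precisely because $\pcal_k(\alpha)\qcal_k(\alpha)$ must be forced to be $z$-independent; for a single $z$ one only gets the weaker pointwise identity.
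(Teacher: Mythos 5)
Your proposal is correct and follows essentially the same route as the paper: compare Fourier transforms (equivalently, atom masses) of $\mu_k^z$ and $(\mu_1^z)^{*k}$, cancel the common factors $|z^{\alpha}|^2 e^{-k\varphi(z)}$ to reduce to $\pcal_k(\alpha)\qcal_k(\alpha) = (\Pi_{h^1}(z,z))^k/\Pi_{h^k}(z,z)$, and then separate variables to identify the common constant with $A_k$ via Lemma \ref{POWERS} and Proposition \ref{EQUIV}. Your bookkeeping of the $1/k$-dilation of the atoms and of the frame factors is in fact more careful than the paper's.
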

\begin{proof} By the definition \eqref{PHK} of $\pcal_{h^k}(\alpha)$,

 $$\begin{array}{lll} \mu_k^z = (\mu_1^z)^{*k} & \iff & \sum_{\alpha \in kP  \cap \Z^m} \frac{
\pcal_{h^k}(\alpha, z)}{\Pi_{h^k}(z,z)} e^{i \langle \alpha, \xi \rangle} =  \left( \sum_{\alpha \in P} \frac{
\pcal_{h^1}(\alpha, z)}{\Pi_{h^1}(z,z)} e^{i \langle \alpha, \xi \rangle} \right)^k\\&&\\
& \iff & \frac{
\pcal_{h^k}(\alpha, z)}{\Pi_{h^k}(z,z)} = \frac{\pcal_{h^1}(\alpha, z)^k}{(\Pi_{h^1}(z,z))^k},  \;\;\;\;\forall \alpha \in k P \cap \Z^m\\&& \\ 
&\iff & 
 \frac{1}{\qcal_{h^k}(\alpha) \Pi_{h^k}(z,z)} = \frac{1}{(\Pi_{h^1}(z,z))^k} \pcal(\alpha),  \;\;\;\;\forall \alpha \in k P \cap \Z^m\\&&\\
 & \iff  & \frac{(\Pi_{h^1}(z,z))^k}{\Pi_{h^k}(z,z)} =  \pcal(\alpha) \qcal_{h^k}(\alpha),  \;\;\;\;\forall \alpha \in k P \cap \Z^m.

 \end{array}$$
 Equality is only possible if the left side is independent of $z$ and the right side is independent of $\alpha$. Therefore, both must
 be a constant, and the left side must be the constant $A_k$ determined by Proposition \ref{EQUIV}.
 
\end{proof}

We now  complete the proof of Theorem \ref{INVCONV}. First assume that $\mu_k^z = (\mu_1^z)^{*k}$ for all $k$ and all $z\in M^o$. 
Then by Lemma \ref{CONVPOWERS}, $ \frac{ \Pi_{h^k} (z,z)}{  (\Pi_{h^1}(z,z))^k}  =  A_k$ for all $k$ and $z \in M^o$) (hence for $z \in M$). 
Then, by Proposition \ref{EQUIV},   $\Pi_{h^k}(z,z)$ is a constant for all $k$, say $D_k$. The constant $D_k$ is determined by integrating and 
as in Lemma \ref{EQUIV},  $D_k =\frac{1}{Vol(M)} \dim H^0(M, L^k) $. By the 
 Bergman kernel expansion \eqref{TYZ}, $\omega$ is a CSC metric.
 
 Conversely suppose that $\rm{Hilb_k}(h)$ is balanced for each $k$, i.e. that  $\Pi_{h^k}(z) $ is constant. By Proposition \ref{EQUIV},   $ \frac{ \Pi_{h^k} (z,z)}{  (\Pi_{h^1}(z,z))^k}  =  A_k$ for all $k$ and $z \in M^o$ and by Lemma \ref{CONVPOWERS}, $\mu_k^z = (\mu_1^z)^{*k}$ for all $k$.

\subsection{Differential entropy of Gaussian measures on $H^0(M, L^k)$: Proof of Proposition \ref{HILBH} }

Proposition \ref{HILBH} is of a different nature from the preceding results, since it concerns Gaussian measures on $H^0(M, L^k)$ induced
by Hermitian metrics on $K$, rather than the toric measures $d\mu_k^z$. But it is related in that both concern entropies of probability measures
induced by \kahler metrics. The proof is rather simple, because we may reduce it to results of Donaldson on balanced metrics. 
    
\begin{proof} The entropy $H(\gamma_P | \gamma_I)$ of this
    Gaussian measure relative to that of the background is $- \log \det P.$

  In the case of a toric \kahler manifold, we may represent an inner product by the norming constants $Q_{h_k}(\alpha)$. 
 In fact the
   toric Gaussian measure is the product measure $$\prod_{\alpha \in k P  \cap \Z^m} \sqrt{Q_{h_k}(\alpha)} e^{- \langle Q_{h_k}(\alpha)^{-1} x, x \rangle} dx $$ Then
  $\det P = \prod_{\alpha \in k P  \cap \Z^m} Q_{h_k}(\alpha)$. It follows that the differential entropy of the associated Gaussian measure is
  \begin{equation} \label{HgammaQ} H(\gamma_{\vec Q_{h_k}}) = - \log \det \rm{Hilb}_k(h) =  -\sum_{\alpha \in k P  \cap \Z^m} \log Q_{h_k}(\alpha). \end{equation}

   Interestingly, \eqref{HgammaQ} is  the functional $\lcal$  introduced by Donaldson in \cite[(10)]{D05}. In \cite[Lemma 2]{D05} and \cite[Corollary 1]{D05} 
   it is proved that a metric is balanced if and only if it is a critical point of the functional $\wt{\lcal} = \lcal - \frac{d}{V} I $ on the space $\kcal$ of \kahler
   metrics in the fixed $(1,1)$ class. In fact, as explained there, $\delta \lcal$ vanishes for all $\delta \phi$ of integral zero if and only if the density
   of states $\Pi_{h^k}(z)$ is a constant. The second term $-\frac{d}{V} I$ is only to fix the undetermined constant in the \kahler potential and may be omitted
   if we work with global potentials on the open orbit.

     \end{proof}

\end{document}